\let\reftagform@=\tagform@
\def\tagform@#1{\maketag@@@{(\ignorespaces\textcolor{blue}{#1}\unskip\@@italiccorr)}}
\renewcommand{\eqref}[1]{\textup{\reftagform@{\ref{#1}}}}
\newtheorem{theorem}{Theorem}[section]
\newtheorem{lemma}[theorem]{Lemma}
\newtheorem{proposition}[theorem]{Proposition}
\newtheorem{corollary}[theorem]{Corollary}
\theoremstyle{definition}
\newtheorem{example}[theorem]{Example}
\theoremstyle{remark}
\newtheorem{remark}[theorem]{Remark}
\numberwithin{equation}{section}
  \def\etal{et al.\,}
\begin{document}
\setcounter{page}{1}

\title[Superquadratic functions]{Improved  eigenvalue inequalities via two major subclasses of  Superquadratic functions   }

\author[M. Kian ]{Mohsen Kian }

\address{Mohsen Kian: Department of Mathematics, University of Bojnord, P.O. Box
1339, Bojnord 94531, Iran}
\email{kian@ub.ac.ir }

\subjclass[2020]{Primary:   26A51, 15A45  Secondary:  15A18, 47A56.}

\keywords{Superquadratic function, convex function,  eigenvalue, positive definite matrix}

\begin{abstract}
  There exist two major subclasses in the class of  superquadratic functions, one  comprises concave and decreasing functions, while the other consists of convex and monotone increasing functions.  Leveraging this distinction, we introduce eigenvalue inequalities for each case. The characteristics of these functions allow us to advance our findings in two ways: firstly, by refining existing results related to eigenvalues for convex functions, and secondly, by deriving complementary inequalities for other function types. To bolster our claims, we will provide illustrative examples.

\end{abstract}

\maketitle

\section{Introduction}
Convexity is one of the most basic notions in both theoretic and applied mathematics. Convex functions play central role in many fields.  Geometrically, a convex function lies above its tangent lines.  Equivalently, if  a function $f : J\subseteq\mathds{R}\to \mathds{R}$ is convex, then it has a non-empty subdifferential at any interior point of its domain, say, for every $s\in J$,
  there exists a $a
\in \mathds{R}$ such that
\begin{align}
f\left( t \right) \ge f\left( s \right) + a \left( {t - s}
\right) \label{eq1.2}
\end{align}
for all $t\in J$. According to this,  Abramovich \etal \cite{ajs} introduced the class of superquadratic functions. A function $f$ in this class,   requires to be   above its tangent line plus a translation of $f$
itself. Symbolically, a function $f:[0,\infty)\to\mathds{R}$  is called superquadratic provided that for all $t\geq0$ there exists a constant $C_t\in\mathbb{R}$  such that
  \begin{eqnarray*}
    f(s)\geq f(t)+C_t(s-t)+f(|s-t|)
  \end{eqnarray*}
for all $s\geq0$.   If $-f$ is a superquadratic function, then  $f$ is  called \textit{subquadratic}.  Every superquadratic function $f:[0,\infty)\to\mathds{R}$ enjoys  a Jensen inequality as
\begin{align}\label{spq-2}
  f(\alpha t+(1-\alpha)s)\leq \alpha f(t) +(1-\alpha)f(s)-\alpha f((1-\alpha)|t-s|)-(1-\alpha)f(\alpha |t-s|)
\end{align}
for all $t,s\geq0$ and $\alpha\in[0,1]$. In a first look at this inequality, they seems to be stronger than convex functions. However, this is not the case in general. Basic properties of superquadratic functions   read  as follows.
 \begin{lemma}\label{spq-1}\cite{ajs,abm}
 If   $f:[0,\infty)\to\mathds{R}$  is   superquadratic, then
 \begin{align*}
   &{\rm (i)}\ \ f(0)\leq0;\\
    &{\rm (ii)}\  \mbox{If $f(0)=f'(0)=0$ and  $f$ is differentiable at $t$, then $C_t=f'(t)$};\\
 &{\rm (iii)}\ \mbox{ If $f\geq0$, then $f$ is convex and $f(0)=f'(0)=0$}.
 \end{align*}
  \end{lemma}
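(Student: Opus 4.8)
The plan is to read off all three items directly from the defining inequality
\[
f(s)\ \geq\ f(t)+C_t(s-t)+f(|s-t|)\qquad(s,t\geq0),
\]
each time by choosing the free variable $s$ appropriately. For (i) I would simply set $s=t$: the linear term vanishes and $|s-t|=0$, so the inequality collapses to $f(t)\geq f(t)+f(0)$, i.e. $f(0)\leq0$.

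For (ii), assume $f(0)=f'(0)=0$ and that $f$ is differentiable at $t$. Rearranging the defining inequality gives, for $s>t$, $\frac{f(s)-f(t)}{s-t}\geq C_t+\frac{f(|s-t|)}{s-t}$, and for $s<t$ the reversed inequality. In each case the last term equals $\pm\frac{f(|s-t|)-f(0)}{|s-t|-0}$, which tends to $f'(0)=0$ as $s\to t$. Letting $s\to t^{+}$ yields $f'(t)\geq C_t$, and letting $s\to t^{-}$ yields $f'(t)\leq C_t$; hence $C_t=f'(t)$.

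For (iii), suppose $f\geq0$. Then (i) together with $f\geq0$ forces $f(0)=0$. Since the increment $f(|s-t|)$ is now nonnegative, the defining inequality already gives $f(s)\geq f(t)+C_t(s-t)$ for all $s,t\geq0$; thus $C_t$ is a supporting slope of $f$ at every point, and a function on an interval admitting a supporting line at each point is convex (the standard three-point argument). It remains to prove $f'(0)=0$. Setting $s=0$ in the defining inequality and using $f(0)=0$ gives $C_t t\geq2f(t)$; setting $s=2t$ gives $f(2t)\geq2f(t)+C_t t\geq4f(t)$. Replacing $t$ by $t/2$ repeatedly, this iterates to $f(t/2^{n})\leq4^{-n}f(t)$, so $0\leq\frac{f(t/2^{n})}{t/2^{n}}\leq2^{-n}\,\frac{f(t)}{t}\to0$. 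Since $f$ is convex, the secant slope $s\mapsto\frac{f(s)-f(0)}{s}$ is nondecreasing on $(0,\infty)$, so $\lim_{s\to0^{+}}\frac{f(s)}{s}$ exists; evaluating it along $s=t/2^{n}$ shows the limit is $0$, i.e. $f'(0)=f'_{+}(0)=0$.

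The only genuinely delicate point is the last one, pinning down $f'(0)=0$. The key device is the auxiliary bound $f(2t)\geq4f(t)$, obtained by feeding $s=0$ and $s=2t$ into the definition; it lets me descend toward $0$ and, combined with the monotonicity of secant slopes (available once convexity is established), forces the one-sided derivative at $0$ to vanish. Everything else reduces to a single substitution.
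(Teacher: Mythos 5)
Your proof is correct. The paper does not prove Lemma \ref{spq-1} at all --- it is quoted from the cited references \cite{ajs,abm} --- and your argument is essentially the standard one from that source: $s=t$ for (i), one-sided difference quotients plus $f'(0)=0$ for (ii), and for (iii) the observations that $f(|s-t|)\ge 0$ turns the definition into a supporting-line condition (hence convexity) while the substitutions $s=0$ and $s=2t$ give $C_tt\ge 2f(t)$ and $f(2t)\ge 4f(t)$, which together with the monotone secant slope force $f'_+(0)=0$.
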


Basic properties of this class of functions  can be found in \cite{ajs}. Extension   to several variable functions have been given in \cite{abm}. Some classical inequalities  for convex functions like Jensen-Stefensen inequality,  Hermite-Hadamard inequality and Fejer inequality,   were proved for this class in \cite{Ab-Ba-Pe,SIP}.  Regarding operator extensions of the classical results, we proved a   variant of the Jensen inequality for superquadratic functions involving operators in \cite{K}. The authors of  \cite{Ba-Ma-Pe} presented some Mercer type operator inequalities for superquadratic functions. Regarding Mercer inequality, E. Anjdani \cite{An} gave another variant and also presented some reverse results in \cite{An-Ch}.        In \cite{Ki-Al}, some trace inequalities have been shown for these functions. Recently, the authors of \cite{Kr-Mo-Sa} have studied  logarithmically  superquadratic functions and their properties.

Looking  superquadratic functions, one can notice that there exist two main subclasses in the class of  superquadratic functions, one contains concave and decreasing functions and the other, contains convex and monotone increasing functions. According to this point of view,  we   use the nature of these functions to present eigenvalue inequalities  and then apply our results in two directions. First improving some known results concerning eigenvalues for convex functions and second, obtaining some complimentary  inequalities  for some  other type functions. Some examples will clarify our results.

We need some basic facts from matrix analysis.

 \subsection{Preliminaries}
Throughout the paper,  we assume that $\mathbb{M}_n$ is the   algebra of $n\times n$ matrices with complex entries. A Hermitian matrix $A\in\mathbb{M}_n$ is called positive semi-definite and denoted by $A\geq0$ (positive definite and denoted by $A>0$) if all of its eigenvalues are non-negative (positive). This establishes a partial order on the set of Hermtian matrices, the celebrated L\"{o}wner partial order: $A\leq B$ if and only if $B-A\geq 0$. We denote the set of positive semi-definite matrices and positive definite matrices by $\mathbb{M}_n^+$ and $\mathbb{M}_n^{++}$, respectively. For a Hermitian matrix $A$, we mean by $\lambda^\downarrow(H)$, the vector of eigenvalues of $H$ arranged in decreasing order and with counted their multiplicities, i.e., $\lambda^\downarrow(H)=(\lambda_1^\downarrow(H),\dots,\lambda_n^\downarrow(H))$.  The minimax principle regarding eigenvalues of a Hermitian matrix reads as follows.

 \begin{lemma}[The Minimax Principle]\cite[Corollary III.1.2]{bh}\label{minimax}
If $H\in\mathbb{M}_n$ is a Hermitian matrix, then
\begin{align}
  \lambda_k^\downarrow(H)=\max_{\substack{\mathcal{M}\subseteq\mathds{C}^n\\ \mathrm{dim}(\mathcal{M})=k}}\min_{\substack{x\in\mathcal{M}\\ \|x\|=1}}\langle Hx,x\rangle
  =\min_{\substack{\mathcal{M}\subseteq\mathds{C}^n\\ \mathrm{dim}(\mathcal{M})=n-k+1}}\max_{\substack{x\in\mathcal{M}\\ \|x\|=1}}\langle Hx,x\rangle.
\end{align}
\end{lemma}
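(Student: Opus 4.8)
The plan is to derive both formulas directly from the spectral theorem, the one computational ingredient being that for a unit vector the Rayleigh quotient is a convex combination of the eigenvalues. Fix a Hermitian $H\in\mathbb{M}_n$ and pick an orthonormal basis $u_1,\dots,u_n$ of $\mathbb{C}^n$ with $Hu_i=\lambda_i^\downarrow(H)\,u_i$, so that $\lambda_1^\downarrow(H)\ge\cdots\ge\lambda_n^\downarrow(H)$. If $x=\sum_{i=1}^n c_iu_i$ is a unit vector, then $\sum_i|c_i|^2=1$ and $\langle Hx,x\rangle=\sum_{i=1}^n\lambda_i^\downarrow(H)\,|c_i|^2$; every estimate below is obtained by restricting this sum to a suitable block of indices. (Note also that the Rayleigh quotient attains its extrema on the compact unit sphere of any subspace, so the inner $\min$ and $\max$ are genuine.)

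For the first equality I would prove the two inequalities separately. The bound ``$\ge$'' follows by testing the outer maximum on the single subspace $\mathcal{M}_0=\mathrm{span}\{u_1,\dots,u_k\}$: for a unit $x\in\mathcal{M}_0$ one gets $\langle Hx,x\rangle=\sum_{i=1}^k\lambda_i^\downarrow(H)\,|c_i|^2\ge\lambda_k^\downarrow(H)$, with equality at $x=u_k$, so the inner minimum over $\mathcal{M}_0$ equals $\lambda_k^\downarrow(H)$ and the outer maximum is at least $\lambda_k^\downarrow(H)$. For the bound ``$\le$'', let $\mathcal{M}$ be an arbitrary $k$-dimensional subspace and put $\mathcal{N}=\mathrm{span}\{u_k,u_{k+1},\dots,u_n\}$, which has dimension $n-k+1$. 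Since $\dim\mathcal{M}+\dim\mathcal{N}=n+1>n$, the subspace $\mathcal{M}\cap\mathcal{N}$ contains a unit vector $x$, and for it $\langle Hx,x\rangle=\sum_{i=k}^n\lambda_i^\downarrow(H)\,|c_i|^2\le\lambda_k^\downarrow(H)$ because $\lambda_i^\downarrow(H)\le\lambda_k^\downarrow(H)$ for $i\ge k$. Hence the inner minimum over $\mathcal{M}$ is $\le\lambda_k^\downarrow(H)$ for every admissible $\mathcal{M}$, and so is the outer maximum. Combining the two bounds gives the first equality.

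For the second equality I would simply apply the first one to $-H$. Using $\lambda_j^\downarrow(-H)=-\lambda_{n-j+1}^\downarrow(H)$ together with $\min(-S)=-\max(S)$ and $\max(-S)=-\min(S)$, the identity $\lambda_j^\downarrow(-H)=\max_{\dim\mathcal{M}=j}\min_{x\in\mathcal{M},\,\|x\|=1}\langle -Hx,x\rangle$ becomes, after negating and relabelling $j=n-k+1$, precisely $\lambda_k^\downarrow(H)=\min_{\dim\mathcal{M}=n-k+1}\max_{x\in\mathcal{M},\,\|x\|=1}\langle Hx,x\rangle$.

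The only step that is not pure bookkeeping is the existence of a nonzero vector in $\mathcal{M}\cap\mathcal{N}$, i.e. the dimension inequality $\dim(\mathcal{M}\cap\mathcal{N})\ge\dim\mathcal{M}+\dim\mathcal{N}-n$; this is what makes the ``$\le$'' direction work and is the standard device in all Courant--Fischer type arguments. Since the lemma is quoted from \cite{bh}, one could of course just refer there, but the argument above is short and self-contained.
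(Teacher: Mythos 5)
Your proof is correct and complete. The paper does not prove this lemma at all --- it is quoted verbatim from Bhatia's \emph{Matrix Analysis} \cite[Corollary III.1.2]{bh} --- so there is no in-paper argument to compare against; your write-up is precisely the standard Courant--Fischer argument given in that reference (spectral decomposition, the test subspace $\mathrm{span}\{u_1,\dots,u_k\}$ for the lower bound, the dimension-count intersection with $\mathrm{span}\{u_k,\dots,u_n\}$ for the upper bound, and the passage to $-H$ for the dual formula). All steps check out, including the index bookkeeping $\lambda_j^\downarrow(-H)=-\lambda_{n-j+1}^\downarrow(H)$ in the last paragraph.
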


For two Hermitian matrices $H$ and $K$, the inequality $\lambda^\downarrow(H)\leq\lambda^\downarrow(K)$ means $\lambda_j(H)\leq \lambda_j(K)$ for all $j=1,\dots,n$. It is easy to see that
\begin{align}\label{unit}
\lambda^\downarrow(H)\leq \lambda^\downarrow(K)\qquad \Longleftrightarrow\qquad  H\leq U^*KU\quad\mbox{for some unitary $U\in\mathbb{M}_n$}.
\end{align}

It is known that if $f$ is a convex function on an interval $J$, then
\begin{align}\label{mp}
 f(\langle Ax,x\rangle)\leq \langle f(A)x,x\rangle
\end{align}
  for every  Hermitian matrix $A$ with eigenvalues in $J$ and any unit vector $x\in\mathds{C}^n$. If $f$ is concave, the inequality is reversed.   We showed in \cite{K,KS} that if $f$ is a superquadratic function, then
\begin{align}\label{k}
f(\langle Ax,x\rangle)\leq \langle f(A)x,x\rangle-\langle f(|A-\langle Ax,x\rangle|)x,x\rangle
\end{align}
holds for any unit vector $x$ and any positive semi-definite matrix $A$. If in  addition $\Phi$ is a unital positive linear map on $\mathbb{M}_n$, then
\begin{align}\label{kd}
f(\langle \Phi(A)x,x\rangle)\leq \langle \Phi(f(A))x,x\rangle-\langle \Phi(f(|A-\langle \Phi(A)x,x\rangle|))x,x\rangle.
\end{align}

It has been shown in \cite{Au-Si} that if $f:J\to\mathds{R}$ is convex and increasing, then
\begin{align}\label{original}
 \lambda^\downarrow(f(1-\alpha)A+\alpha B) \leq \lambda^\downarrow((1-\alpha)f(A) +\alpha   f(B))
\end{align}
for  and all Hermitian matrices $A$ and $B$, whose eigenvalues are contained in $J$. In particular,
\begin{align}\label{aj-p}
 \lambda^\downarrow\left(\left(\frac{A+B}{2}\right)^p\right)  \leq \lambda^\downarrow\left(\frac{A^p+B^p}{2}\right)\qquad (p\geq1).
\end{align}
In the case where the convexity of $f$ is changed or replaced by other properties, see \cite{Au-Bo,ks,L-P}.

\section{Main Result}
Our motivation is based on this fact that the class of superquadratic functions contains two major subclasses: first, those functions which are convex and increasing, like $f(x)=x^p$ for $p\geq2$ and $f(x)=x^2\log(x)$, and second, those functions which are concave and decreasing, like $f(x)=-x^q$ for $1\leq q\leq 2$ and $f(x)=-\left(1+x^{1/r}\right)^r$ for $r\in(0,1]$. This helps us to proceed in two directions, one for improving some results  concerning  convex functions, and two, establishing reverse inequalities for some other   functions.

Graphs of some typical elements in the two subclasses of  superquadratic functions have been given in Figure~1.

		\begin{center}
		\begin{figure}[h!]
\caption{Graphs of some functions in the two subclasses of  superquadratic functions}
			\subfigure[Increasing convex functions]{\includegraphics[height=4cm,width=6cm]{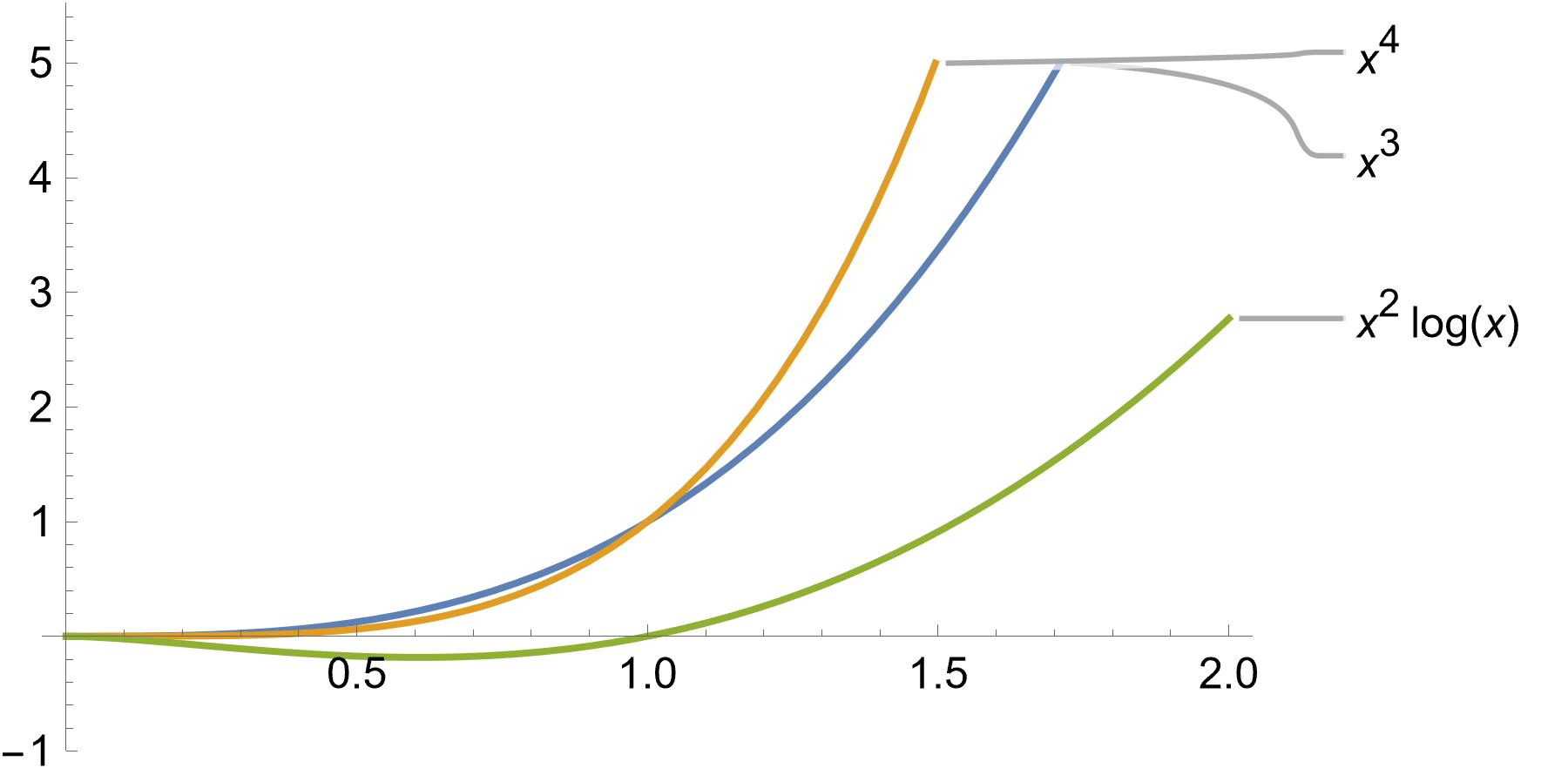}\label{i-c}}
			\hspace{1cm}
			\subfigure[Decreasing concave functions]{\includegraphics[height=4cm,width=6cm]{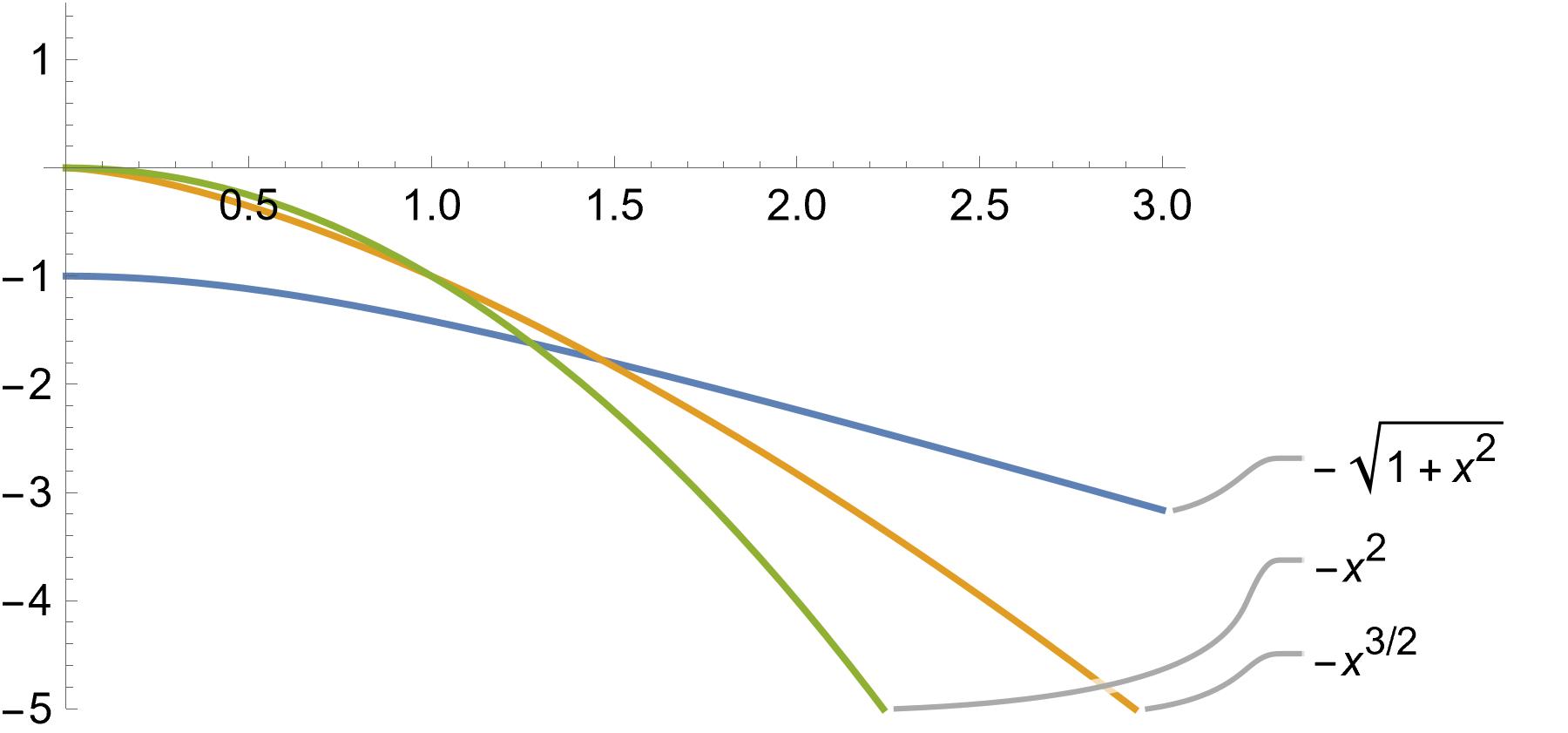}\label{d-c}}
					\end{figure}
	\end{center}

\bigskip

In our first result, we consider those functions which are concave and decreasing.


\begin{theorem}\label{th-n-1}
  Let $f:[0,\infty)\to\mathds{R}$ be a superquadratic function, $\alpha\in[0,1]$ and let $A,B\in~\mathbb{M}_n^+$. If $f$ is concave and decreasing, then
\begin{align}\label{thq-2}
\lambda_k^\downarrow(f(1-\alpha)A+\alpha B)\leq  \lambda_k^\downarrow(S)\qquad (k=1,\dots,n),
\end{align}
where
 \begin{align*}
 S&=(1-\alpha)(  f(A) -f(\lambda_1(A)-\lambda_n(A))- f(\alpha|A-B|))  \\
  & \qquad\qquad\qquad\qquad +\alpha (   f(B)  -f(\lambda_1(B)-\lambda_n(B))  -  f((1-\alpha)|A-B|) ).
\end{align*}
\end{theorem}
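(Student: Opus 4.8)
The plan is to combine the minimax characterization of eigenvalues (Lemma~\ref{minimax}) with the Jensen-type inequality \eqref{k} for superquadratic functions, applied to the concave and decreasing hypothesis. First I would fix $k$ and use the max-min form of the minimax principle: $\lambda_k^\downarrow(f((1-\alpha)A+\alpha B))=\max_{\dim\mathcal M=k}\min_{x\in\mathcal M,\|x\|=1}\langle f((1-\alpha)A+\alpha B)x,x\rangle$. For a fixed unit vector $x$, write $a=\langle Ax,x\rangle$, $b=\langle Bx,x\rangle$, so that $\langle((1-\alpha)A+\alpha B)x,x\rangle=(1-\alpha)a+\alpha b$. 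Since $f$ is concave we have $\langle f((1-\alpha)A+\alpha B)x,x\rangle\leq f((1-\alpha)a+\alpha b)$ by the reversed form of \eqref{mp}. Then I apply the superquadratic Jensen inequality \eqref{spq-2} with $t=a$, $s=b$:
\begin{align*}
f((1-\alpha)a+\alpha b)\leq (1-\alpha)f(a)+\alpha f(b)-(1-\alpha)f(\alpha|a-b|)-\alpha f((1-\alpha)|a-b|).
\end{align*}

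Next I need to bound each of the four terms on the right by the corresponding diagonal entry of $S$ at the vector $x$. For the first two terms, since $f$ is superquadratic, \eqref{k} gives $f(a)=f(\langle Ax,x\rangle)\leq \langle f(A)x,x\rangle-\langle f(|A-a|)x,x\rangle$, and similarly for $b$. Because $f$ is decreasing and $|A-a|\leq \lambda_1(A)-\lambda_n(A)$ as a real number bound on the spectrum of $|A-\langle Ax,x\rangle|$ (the operator $|A-a I|$ has norm at most $\lambda_1(A)-\lambda_n(A)$ whenever $a\in[\lambda_n(A),\lambda_1(A)]$, which holds since $a=\langle Ax,x\rangle$), monotonicity gives $\langle f(|A-a|)x,x\rangle\geq f(\lambda_1(A)-\lambda_n(A))$; hence $f(a)\leq \langle f(A)x,x\rangle-f(\lambda_1(A)-\lambda_n(A))$. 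For the subtracted terms $-(1-\alpha)f(\alpha|a-b|)$ and $-\alpha f((1-\alpha)|a-b|)$, I use that $f$ is decreasing together with $|a-b|=|\langle(A-B)x,x\rangle|\leq\langle|A-B|x,x\rangle$; since $f$ is decreasing, $f(\alpha|a-b|)\geq f(\alpha\langle|A-B|x,x\rangle)$, and then concavity of $f$ (reversed \eqref{mp} again) gives $f(\alpha\langle|A-B|x,x\rangle)\geq\langle f(\alpha|A-B|)x,x\rangle$, so $-f(\alpha|a-b|)\leq -\langle f(\alpha|A-B|)x,x\rangle$. Assembling these four estimates yields $\langle f((1-\alpha)A+\alpha B)x,x\rangle\leq\langle Sx,x\rangle$ for every unit vector $x$.

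Finally, since $\langle f((1-\alpha)A+\alpha B)x,x\rangle\leq\langle Sx,x\rangle$ holds pointwise, for any subspace $\mathcal M$ with $\dim\mathcal M=k$ we get $\min_{x\in\mathcal M,\|x\|=1}\langle f((1-\alpha)A+\alpha B)x,x\rangle\leq\min_{x\in\mathcal M,\|x\|=1}\langle Sx,x\rangle$, and taking the maximum over all such $\mathcal M$ gives $\lambda_k^\downarrow(f((1-\alpha)A+\alpha B))\leq\lambda_k^\downarrow(S)$ by Lemma~\ref{minimax}. (Equivalently one can phrase the conclusion via \eqref{unit}.)

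The main obstacle I anticipate is the careful handling of the sign changes induced by $f$ being decreasing: one must make sure that each monotonicity step pushes the inequality in the correct direction, and that the concavity step \eqref{mp} (which reverses) is applied to the right operator. In particular, the chain for the subtracted terms mixes monotonicity (which increases $f(\alpha|a-b|)$ when we pass from the scalar $|a-b|$ to $\langle|A-B|x,x\rangle$) with concavity (which then decreases it when we pass to $\langle f(\alpha|A-B|)x,x\rangle$), and both steps are needed; verifying that this composite estimate is valid — and that $\alpha|A-B|$ and $(1-\alpha)|A-B|$ have spectra in the domain $[0,\infty)$ of $f$ — is the delicate bookkeeping. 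Everything else is a routine application of the minimax principle once the pointwise inequality $\langle f((1-\alpha)A+\alpha B)x,x\rangle\leq\langle Sx,x\rangle$ is in hand.
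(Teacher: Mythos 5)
Your proof is correct, and the heart of it --- the four-term estimate combining \eqref{spq-2}, \eqref{k}, the bound $|A-\langle Ax,x\rangle I|\leq(\lambda_1(A)-\lambda_n(A))I$, and the chain $f(c|\langle(A-B)x,x\rangle|)\geq f(c\langle|A-B|x,x\rangle)\geq\langle f(c|A-B|)x,x\rangle$ --- is exactly the paper's argument (its inequalities \eqref{we1} and \eqref{eb2}). Where you genuinely diverge is in how the left-hand side is reduced to a scalar. The paper uses the fact that $f$ is \emph{decreasing} to write $\lambda_k^\downarrow(f(H))=f(\lambda_{n-k+1}^\downarrow(H))$ for $H=(1-\alpha)A+\alpha B$, picks the extremal $k$-dimensional subspace from the min-max form of Lemma~\ref{minimax}, and converts $f(\max)$ into $\min f$ before estimating. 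You instead use the \emph{concavity} of $f$ via the reversed \eqref{mp} to get $\langle f(H)x,x\rangle\leq f(\langle Hx,x\rangle)$ for \emph{every} unit vector $x$. This is a legitimate alternative since both hypotheses are available, and it actually buys you more: your chain yields the pointwise bound $\langle f(H)x,x\rangle\leq\langle Sx,x\rangle$ for all unit vectors, i.e.\ the L\"owner-order inequality $f((1-\alpha)A+\alpha B)\leq S$, which is strictly stronger than the stated eigenvalue inequality (it would, for instance, remove the unitary $V$ from Corollary~\ref{co-n-1}). The eigenvalue conclusion then follows from Weyl monotonicity or, as you phrase it, from the max-min principle. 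The paper's route, by contrast, is the one that survives in the companion convex/increasing case (Theorem~\ref{th-t1}), where no such pointwise operator bound is available and one really must work subspace by subspace. One small bookkeeping point you flagged yourself and handled correctly: $\alpha|A-B|$ and $(1-\alpha)|A-B|$ are positive semi-definite, so their spectra lie in $[0,\infty)$ and $f$ may be applied; and the monotonicity step $f(|A-aI|)\geq f(\lambda_1(A)-\lambda_n(A))I$ needs no operator monotonicity since $|A-aI|$ commutes with scalars and the comparison is made eigenvalue by eigenvalue.
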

\begin{proof}
 For a Hermitian matrix $H$ and  a fixed index $k\in\{1,\dots,n\}$, by the minimax principle, Lemma \ref{minimax},  there exists a $k$-dimensional vector subspace $\mathcal{M}\subseteq\mathds{C}^n$ such that
$$\lambda_{n-k+1}^\downarrow(H)=\max\{\langle Hx,x\rangle;\, x\in\mathcal{M},\,\|x\|=1\}.$$
Since $f$ is decreasing, we have
\begin{align}\label{new11}
\begin{split}
 \lambda_k^\downarrow\left[f\left((1-\alpha)A+\alpha B\right)\right]
 & =f\left(\lambda_{n-k+1}^\downarrow\left((1-\alpha)A+\alpha B\right)\right) \\
&= f\left(\max\{\langle ((1-\alpha)A+\alpha B)x,x\rangle;\, x\in\mathcal{M},\,\|x\|=1\}\right)\\
&=\min\{f(\langle ((1-\alpha)A+\alpha B)x,x\rangle);\, x\in\mathcal{M},\,\|x\|=1\}.
\end{split}
\end{align}
Since $f$ is superquadratic, for every unit vector  $x\in\mathds{C}^n$ \eqref{spq-2} entails
\begin{align}\label{new22}
 f(\langle  ((1-\alpha)A+\alpha B )x,x\rangle)&=
 f((1-\alpha)\langle Ax,x\rangle+\alpha \langle B x,x\rangle)\nonumber\\
   &\leq  (1-\alpha)f(\langle Ax,x\rangle)+\alpha f(\langle B x,x\rangle)\\
  &\quad - (1-\alpha)f(\alpha|\langle(A-B)x,x\rangle|)+ \alpha f((1-\alpha)|\langle(A-B)x,x\rangle|).\nonumber
\end{align}
 In addition, applying \eqref{mp} for the  convex   function $t\mapsto|t|$,  implies that
  $$|\langle(A-B)x,x\rangle|\leq  \langle|A-B|x,x\rangle$$
 holds  for every unit vector $x\in\mathds{C}^n$. Since $f$ is concave and monotone decreasing,  this concludes
  \begin{align}\label{eb2}
    \begin{split}
    f(c|\langle(A-B)x,x\rangle|)&\geq  f(c\langle|A-B|x,x\rangle)
       \geq\left\langle  f(c|A-B|)x,x\right\rangle
    \end{split}
  \end{align}
  for every positive real number $c\in\mathds{R}^+$, where in the second inequality we applied  \eqref{mp}  for the concave function $f$.
Moreover, it is clear that   if $Y$ is a positive  semi-definite matrix, then $\lambda_n(Y)\leq \langle Yx,x\rangle \leq \lambda_1(Y)$  holds
for every unit vector $x\in\mathds{C}^n$,  whence $|Y- \langle Yx,x\rangle|\leq \lambda_1(Y)-\lambda_n(Y)$. Consequently,
\begin{align}\label{np}
  \langle f(|Y-\langle Yx,x\rangle|)x,x\rangle \geq f(\lambda_1(Y)-\lambda_n(Y)),
\end{align}
as  $f$ is decreasing.  Accordingly,   utilising \eqref{k}  implies that
  \begin{align}\label{we1}
  \begin{split}
    f(\langle Yx,x\rangle) &\leq  \langle f(Y)x,x\rangle -  \langle f(|Y-\langle Yx,x\rangle|)x,x\rangle\\
 &\leq \langle f(Y)x,x\rangle -f(\lambda_1(Y)-\lambda_n(Y))\qquad(\mbox{by \eqref{np}})
  \end{split}
  \end{align}
  holds for every positive semi-definite  matrix $Y$.
It follows from \eqref{eb2} and \eqref{we1} that
{\small\begin{align}\label{we3}
  \begin{split}
&(1-\alpha)f(\langle Ax,x\rangle)  +\alpha f(\langle B x,x\rangle)\\
&\quad-(1-\alpha)f(\alpha|\langle(A-B)x,x\rangle|) - \alpha f((1-\alpha)|\langle(A-B)x,x\rangle|))\\
&\leq (1-\alpha)(\langle f(A)x,x\rangle -f(\lambda_1(A)-\lambda_n(A)))   +\alpha ( \langle f(B) x,x\rangle
-f(\lambda_1(B)-\lambda_n(B)) )\\
&\quad-(1-\alpha)\left\langle  f(\alpha|A-B|)x,x\right\rangle - \alpha \left\langle f((1-\alpha)|A-B|)x,x\right\rangle.
\end{split}
\end{align}}
Combining   \eqref{new11},\eqref{new22} and \eqref{we3} we obtain
\begin{align*}
\lambda_k^\downarrow(f(1-\alpha)A+\alpha B)& \leq
\min\{\langle Sx,x\rangle ,\, x\in\mathcal{M},\,\|x\|=1\}\\
&\leq \lambda_k^\downarrow(S),
\end{align*}
  where the second inequality is a  consequence of  the minimax principle and
\begin{align*}
 S&=(1-\alpha)(  f(A) -f(\lambda_1(A)-\lambda_n(A))- f(\alpha|A-B|))  \\
  & \qquad\qquad\qquad\qquad +\alpha (   f(B)  -f(\lambda_1(B)-\lambda_n(B))  -  f((1-\alpha)|A-B|) ).
\end{align*}
This completes the proof of \eqref{thq-2}.
\end{proof}

\begin{corollary}\label{co-n-1}
 Let $f:[0,\infty)\to\mathds{R}$ be a superquadratic function  and let $A,B\in~\mathbb{M}_n^+$. If $f$ is concave and decreasing, then
   {\small\begin{align}\label{co1-q4}
 f\left(\frac{A+B}{2}\right)
&\leq   V^*\left[\frac{f(A)+f(B)}{2}- f\left(\left|\frac{A-B}{2}\right|\right)
-\frac{f(\lambda_1(A)-\lambda_n(A))+f(\lambda_1(B)-\lambda_n(B))}{2}\right]V
\end{align}}
for some unitary matrix $V\in\mathbb{M}_n$.
\end{corollary}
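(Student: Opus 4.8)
The plan is to obtain Corollary \ref{co-n-1} as the midpoint case $\alpha=\tfrac12$ of Theorem \ref{th-n-1}, followed by a passage from an eigenvalue inequality to a matrix inequality via the unitary equivalence \eqref{unit}.

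First I would put $\alpha=\tfrac12$ in \eqref{thq-2}. The convex combination $(1-\alpha)A+\alpha B$ becomes $\frac{A+B}{2}$, so the left-hand side is $\lambda_k^\downarrow(f(\frac{A+B}{2}))$. It then remains to simplify the matrix $S$ of Theorem \ref{th-n-1}. Since $\alpha=1-\alpha=\tfrac12$, the two translation terms $f(\alpha|A-B|)$ and $f((1-\alpha)|A-B|)$ coincide, both equal to $f(\tfrac12|A-B|)$, and because $\tfrac12>0$ one has $\tfrac12|A-B|=|\tfrac12(A-B)|=|\frac{A-B}{2}|$. Collecting the coefficients $1-\alpha=\alpha=\tfrac12$ in front of each block of $S$, the contribution $-(1-\alpha)f(\alpha|A-B|)-\alpha f((1-\alpha)|A-B|)$ collapses to $-f(|\frac{A-B}{2}|)$, while the remaining terms assemble into $\frac{f(A)+f(B)}{2}-\frac{f(\lambda_1(A)-\lambda_n(A))+f(\lambda_1(B)-\lambda_n(B))}{2}$. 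Hence $S$ is exactly the matrix inside the square brackets of \eqref{co1-q4}.

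At this stage Theorem \ref{th-n-1} gives $\lambda_k^\downarrow(f(\frac{A+B}{2}))\leq\lambda_k^\downarrow(S)$ for every $k=1,\dots,n$, that is, $\lambda^\downarrow(f(\frac{A+B}{2}))\leq\lambda^\downarrow(S)$. Both matrices involved are Hermitian: $f(\frac{A+B}{2})$ because $\frac{A+B}{2}\in\mathbb{M}_n^+$ and $f$ is real-valued, and $S$ as a real linear combination of Hermitian matrices (values of $f$ at positive semi-definite matrices $A$, $B$, $|\frac{A-B}{2}|$, and scalar multiples of the identity coming from $f(\lambda_1(A)-\lambda_n(A))$ and $f(\lambda_1(B)-\lambda_n(B))$). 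Therefore I may invoke \eqref{unit}, which converts the eigenvalue inequality into $f(\frac{A+B}{2})\leq V^*SV$ for some unitary $V\in\mathbb{M}_n$; this is precisely \eqref{co1-q4}.

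I do not expect any real obstacle here, since the statement is a direct specialization. The only point deserving a line of care is the bookkeeping that merges the two translation terms of $S$ into the single term $f(|\frac{A-B}{2}|)$ when $\alpha=\tfrac12$, together with the elementary identity $\tfrac12|A-B|=|\frac{A-B}{2}|$. One should also record that all matrix arguments handed to $f$ — namely $\frac{A+B}{2}$, $A$, $B$, $|\frac{A-B}{2}|$ — are positive semi-definite and the scalar arguments $\lambda_1(A)-\lambda_n(A)$ and $\lambda_1(B)-\lambda_n(B)$ are non-negative, so $f$ is evaluated only on its domain $[0,\infty)$ throughout.
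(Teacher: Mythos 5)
Your proposal is correct and follows exactly the paper's own route: the paper proves this corollary in one line by setting $\alpha=1/2$ in Theorem \ref{th-n-1} and invoking \eqref{unit}. Your expanded bookkeeping of how the two translation terms of $S$ merge into $f\left(\left|\frac{A-B}{2}\right|\right)$ is the correct (and only) computation needed.
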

\begin{proof}
This is a consequence of applying  Theorem~\ref{th-n-1} when $\alpha=1/2$ and \eqref{unit}.
\end{proof}
The function $f(t)=-t^q$ is a superquadratic function for $q\in[1,2]$. It is concave and decreasing and satisfies the condition of Corollary~\ref{co-n-1}.   The next corollary then follows. In-fact, it provides a reverse inequality for \eqref{aj-p}.

\begin{corollary}
Let $A,B$ be positive semi-definite matrices. If $1\leq q\leq 2$, then the exists a unitary $V$ such that
 {\small\begin{align}\label{co1-q4-1}
\left(\frac{A^q+B^q}{2}\right)\leq V\left(\frac{A+B}{2}\right)^qV^*+\left|\frac{A-B}{2}\right|^q
+\frac{(\lambda_1(A)-\lambda_n(A))^q+ (\lambda_1(B)-\lambda_n(B))^q}{2}I.
\end{align}}
\end{corollary}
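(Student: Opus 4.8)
The plan is to obtain \eqref{co1-q4-1} as a direct specialization of Corollary~\ref{co-n-1} to the superquadratic function $f(t)=-t^q$, followed by a careful reversal of the L\"owner inequality. First I would check the hypotheses: as already noted in the paragraph preceding the statement, $f(t)=-t^q$ is superquadratic on $[0,\infty)$ for $q\in[1,2]$, and since $f'(t)=-qt^{q-1}\le0$ and $f''(t)=-q(q-1)t^{q-2}\le0$ on $(0,\infty)$, it is decreasing and concave, so Corollary~\ref{co-n-1} applies. Because $A,B\in\mathbb{M}_n^+$, each of $\tfrac{A+B}{2}$, $A$, $B$, $\bigl|\tfrac{A-B}{2}\bigr|$ is positive semi-definite and the scalars $\lambda_1(A)-\lambda_n(A)$, $\lambda_1(B)-\lambda_n(B)$ are nonnegative, so $f$ may be applied to all of them.

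Next I would substitute $f(t)=-t^q$ into \eqref{co1-q4}. Denoting by $M$ the bracketed Hermitian matrix on the right of \eqref{co1-q4}, the substitution gives $f\bigl(\tfrac{A+B}{2}\bigr)=-\bigl(\tfrac{A+B}{2}\bigr)^q$ and
$$M=-\frac{A^q+B^q}{2}+\left|\frac{A-B}{2}\right|^q+\frac{(\lambda_1(A)-\lambda_n(A))^q+(\lambda_1(B)-\lambda_n(B))^q}{2}\,I,$$
so that Corollary~\ref{co-n-1} reads $-\bigl(\tfrac{A+B}{2}\bigr)^q\le V^*MV$ for some unitary $V\in\mathbb{M}_n$.

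The last step is purely formal: multiplying the previous inequality by $-1$ reverses it, giving $V^*(-M)V\le\bigl(\tfrac{A+B}{2}\bigr)^q$; conjugating both sides by the congruence $X\mapsto VXV^*$ (which preserves the order because $V$ is unitary) yields $-M\le V\bigl(\tfrac{A+B}{2}\bigr)^qV^*$. Since
$$-M=\frac{A^q+B^q}{2}-\left|\frac{A-B}{2}\right|^q-\frac{(\lambda_1(A)-\lambda_n(A))^q+(\lambda_1(B)-\lambda_n(B))^q}{2}\,I,$$
moving the last two terms to the right-hand side gives precisely \eqref{co1-q4-1}.

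There is no real obstacle here: all the analytic content is already carried by Theorem~\ref{th-n-1} and Corollary~\ref{co-n-1}. The only points requiring care are bookkeeping ones — that the inequality flips direction when one passes from $f(t)=-t^q$ back to $t\mapsto t^q$, and that the unitary enters as $V(\cdot)V^*$ rather than $V^*(\cdot)V$ after the congruence. One may also wish to observe that \eqref{co1-q4-1} becomes an equality when $A=B$ (both sides equal $A^q$), a useful consistency check, and that it plays the role of the announced reverse of \eqref{aj-p}.
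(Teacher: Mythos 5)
Your derivation is correct and is precisely the route the paper intends: specialize Corollary~\ref{co-n-1} to $f(t)=-t^q$, then negate and conjugate by the unitary to flip the inequality into the stated form. The only slip is the closing consistency check: when $A=B$ the right-hand side of \eqref{co1-q4-1} is $VA^qV^*+(\lambda_1(A)-\lambda_n(A))^qI$, which equals $A^q$ only when $A$ is a scalar matrix (and $V$ commutes with $A$), so the inequality is in general not an equality there; this aside does not affect the proof itself.
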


Regarding estimation of eigenvalues, we can state the following result.
\begin{corollary}\label{poer1}
 Let $A,B$ be positive semi-definite matrices. If $1\leq q\leq 2$, then
   {\small\begin{align}\label{co1-q4}
\lambda^\downarrow\left[(A+B)^q\right]
&\geq    \lambda^\downarrow\left[2^{q-1}\left(A^q+B^q-(\lambda_1(A)-\lambda_n(A))^q-(\lambda_1(B)-\lambda_n(B))^q\right)
-|A-B|^q \right].
\end{align}}
\end{corollary}

Theorem \ref{th-n-1} provide a converse to \eqref{original}, see \cite[Theorem 2.9]{Au-Si}. Also, Corollary~\ref{poer1} gives a converse to the eigenvalue inequality \eqref{aj-p}, presented by Aujla and Silva.
We give a toy  example and compare the result with the original   estimation. Assume that $q=3/2$ and consider the positive semi-definite matrices
\begin{align*}
  A=\begin{pmatrix}
      5 & -1 \\
      -1 & 5
    \end{pmatrix} \quad\mbox{and}\quad  B=\begin{pmatrix}
      2 & 0 \\
      0 & 4
    \end{pmatrix}.
\end{align*}
Thanks to Matlab software we compute
\begin{align*}
 \lambda^{\downarrow}\left(\frac{A^q+B^q}{2}\right)=(6.266,10.4967)\quad\mbox{and}\quad \lambda^{\downarrow}\left(\left(\frac{A+B}{2}\right)^q\right)=(5.9754,10.2125)
\end{align*}
and
\begin{align*}
 \lambda^{\downarrow}\left(\frac{A^q+B^q}{2}-\left|\frac{A-B}{2}\right|^q
-\frac{(\lambda_1(A)-\lambda_n(A))^q+ (\lambda_1(B)-\lambda_n(B))^q}{2}I \right)=(2.1248,6.5921)
\end{align*}
and so
{\small\begin{align*}
  \lambda^{\downarrow}\left(\frac{A^q+B^q}{2}-\left|\frac{A-B}{2}\right|^q
-\frac{(\lambda_1(A)-\lambda_n(A))^q+ (\lambda_1(B)-\lambda_n(B))^q}{2}I \right)&\leq \lambda^{\downarrow}\left(\left(\frac{A+B}{2}\right)^q\right)\\
&\leq \lambda^{\downarrow}\left(\frac{A^q+B^q}{2}\right).
\end{align*}}

\bigskip

We present another estimation for eigenvalues of concave and decreasing superquadratic functions, using \eqref{kd}.
\begin{theorem}\label{es-th2}
Let $f:[0,\infty)\to\mathds{R}$ be a superquadratic function and let $\Phi:\mathbb{M}_n\to\mathbb{M}_m$ be a unital positive linear map.      If  $f$ is concave and decreasing, then
{\small\begin{align}\label{co2-q2}
\lambda_k^\downarrow\left(f\left(\Phi(A)\right)\right) \leq \lambda_k^\downarrow(\Phi(f(A))) - f(\lambda_1(A)- \lambda_n(A)) ,\qquad (k=1,\dots,n)
\end{align}}
 for every $A\in\mathbb{M}_n^+$.
\end{theorem}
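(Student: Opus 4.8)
The plan is to run the argument of Theorem~\ref{th-n-1} with the single-operator inequality \eqref{k} replaced by its positive-linear-map counterpart \eqref{kd}. Fix $k$. As in the proof of Theorem~\ref{th-n-1}, since $f$ is monotone decreasing one has $\lambda_k^\downarrow(f(H))=f\big(\lambda_{\ell-k+1}^\downarrow(H)\big)$ for any Hermitian $H\in\mathbb{M}_\ell$; taking $H=\Phi(A)\in\mathbb{M}_m$ gives $\lambda_k^\downarrow(f(\Phi(A)))=f\big(\lambda_{m-k+1}^\downarrow(\Phi(A))\big)$. By the min--max form of Lemma~\ref{minimax} there is a subspace $\mathcal{M}\subseteq\mathds{C}^m$ with $\dim\mathcal{M}=k$ and $\lambda_{m-k+1}^\downarrow(\Phi(A))=\max\{\langle\Phi(A)x,x\rangle:\ x\in\mathcal{M},\ \|x\|=1\}$. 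Since this maximum is attained on the compact unit sphere of $\mathcal{M}$ and $f$ is decreasing, it follows that $\lambda_k^\downarrow(f(\Phi(A)))=\min\{f(\langle\Phi(A)x,x\rangle):\ x\in\mathcal{M},\ \|x\|=1\}$.

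Next I would estimate $f(\langle\Phi(A)x,x\rangle)$ from above for each fixed unit vector $x$. Put $c:=\langle\Phi(A)x,x\rangle$. From $\lambda_n(A)I_n\le A\le\lambda_1(A)I_n$ and the fact that $\Phi$ is positive and unital one gets $\lambda_n(A)I_m\le\Phi(A)\le\lambda_1(A)I_m$, hence $c\in[\lambda_n(A),\lambda_1(A)]$. Therefore $A-cI_n$ has spectrum inside $[\lambda_n(A)-\lambda_1(A),\lambda_1(A)-\lambda_n(A)]$, so $|A-cI_n|\le(\lambda_1(A)-\lambda_n(A))I_n$, and since $f$ is decreasing, functional calculus gives $f(|A-cI_n|)\ge f(\lambda_1(A)-\lambda_n(A))I_n$. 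Applying the positive unital map $\Phi$ and pairing against $x$ yields $\langle\Phi(f(|A-cI_n|))x,x\rangle\ge f(\lambda_1(A)-\lambda_n(A))$. Substituting this into \eqref{kd} (used with the map $\Phi$ and the vector $x$) produces
\begin{align*}
f(\langle\Phi(A)x,x\rangle)\le\langle\Phi(f(A))x,x\rangle-f(\lambda_1(A)-\lambda_n(A)),
\end{align*}
valid for every unit vector $x\in\mathds{C}^m$.

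Finally I would take the minimum over unit vectors $x\in\mathcal{M}$ in the last display: the left-hand side becomes $\lambda_k^\downarrow(f(\Phi(A)))$ by the first paragraph, while the right-hand side becomes $\min\{\langle\Phi(f(A))x,x\rangle:\ x\in\mathcal{M},\ \|x\|=1\}-f(\lambda_1(A)-\lambda_n(A))$; and since $\dim\mathcal{M}=k$, the max--min form of Lemma~\ref{minimax} bounds this minimum above by $\lambda_k^\downarrow(\Phi(f(A)))$. This is exactly \eqref{co2-q2}.

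I do not expect a genuine obstacle here: once \eqref{kd} is available, the proof is bookkeeping with the minimax principle, exactly parallel to Theorem~\ref{th-n-1}. The step requiring the most care is the operator bound $|A-\langle\Phi(A)x,x\rangle I_n|\le(\lambda_1(A)-\lambda_n(A))I_n$, which is precisely where unitality of $\Phi$ enters, keeping the scalar $\langle\Phi(A)x,x\rangle$ inside the spectral interval of $A$; this plays the role that \eqref{np} played in Theorem~\ref{th-n-1}. (Consistently with $\Phi(A)\in\mathbb{M}_m$, the index range in \eqref{co2-q2} should be read as $k=1,\dots,m$.)
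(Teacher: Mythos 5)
Your proof is correct and follows essentially the same route as the paper's: establish the scalar bound $f(\langle\Phi(A)x,x\rangle)\le\langle\Phi(f(A))x,x\rangle-f(\lambda_1(A)-\lambda_n(A))$ by combining \eqref{kd} with the observation that unitality forces $\langle\Phi(A)x,x\rangle$ into $[\lambda_n(A),\lambda_1(A)]$ and hence $|A-\langle\Phi(A)x,x\rangle I|\le(\lambda_1(A)-\lambda_n(A))I$, then convert to an eigenvalue inequality via the minimax principle and the monotonicity of $f$. Your parenthetical remark that the index should run over $k=1,\dots,m$ (since $\Phi(A)\in\mathbb{M}_m$) is a fair observation about the statement, not a defect of the argument.
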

\begin{proof}
Let $x\in\mathds{C}^n$ be a unit vector. Since $\Phi$ is  unital, it is evident that $\lambda_n(A)\leq \Phi(A)\leq \lambda_1(A)$. Hence $|A-\langle \Phi(A) x,x\rangle  |\leq \lambda_1(A)- \lambda_n(A)$, as the matrices $A$ and $\langle \Phi(A) x,x\rangle I_n$  commute. Since $f$ is decreasing, it follows that
$f(|A-\langle \Phi(A) x,x\rangle|)\geq f(\lambda_1(A)- \lambda_n(A))$, whence
\begin{align}\label{2a}
\langle\Phi(f(|A-\langle \Phi(A) x,x\rangle|)) x,x\rangle\geq f(\lambda_1(A)- \lambda_n(A)).
\end{align}

Since  $f$ is superquadratic, we obtain from \eqref{kd} and \eqref{2a} that
\begin{align}\label{1a}
f( \langle \Phi(A) x,x\rangle)
   \leq  \langle \Phi(f(A)) x,x\rangle- f(\lambda_1(A)- \lambda_n(A)).
\end{align}
  Fix $k\in\{1,\dots,n\}$. The minimax principle ensures that  there exists a $k$-dimensional vector subspace $\mathcal{M}$ of $\mathds{C}^n$ such that
$$\lambda_{n-k+1}^\downarrow(\Phi(A))=\max\{\left\langle  \Phi(A) x,x\right\rangle;\, x\in\mathcal{M},\,\|x\|=1\}.$$
Furthermore, since $f$ is decreasing,  we have $\lambda_k^\downarrow(f(\Phi(A)))=f(\lambda_{n-k+1}^\downarrow(\Phi(A)))$. Therefore,
\begin{align}\label{3a}
\begin{split}
\lambda_k^\downarrow\left(f\left(\Phi(A)\right)\right)&=f\left(\max\{\left\langle  \Phi(A) x,x\right\rangle,\, x\in\mathcal{M},\,\|x\|=1\}\right)\\
&=\min\{f(\left\langle \Phi(A)  x,x\right\rangle),\, x\in\mathcal{M},\,\|x\|=1\},
\end{split}\end{align}
where we use the fact that $f$ is decreasing in the second equality. Hence
\begin{align*}
  \lambda_k^\downarrow\left(f\left(\Phi(A)\right)\right)&\leq \min\{\langle (\Phi(f(A)) - f(\lambda_1(A)- \lambda_n(A)))x,x\rangle,\, x\in\mathcal{M},\,\|x\|=1\}\\
  &\qquad\qquad\qquad\qquad(\mbox{by \eqref{1a} and \eqref{3a}})\\
  &\leq \lambda_k^\downarrow\left(\Phi(f(A)) - f(\lambda_1(A)- \lambda_n(A))\right),
\end{align*}
by using the minimax principle again.
 \end{proof}

\begin{remark}
  As a consequence of Theorems~\ref{es-th2}, we  have the trace inequality
  $$\mathrm{tr}(f(\Phi(A)))\leq \mathrm{tr}(\Phi(f(A))-  f(\lambda_1(A)- \lambda_n(A)))$$
  under the same hypothesis.
\end{remark}
\begin{remark}
 We compare the two estimations in Theorems~\ref{es-th2} and \ref{th-n-1}.   Suppose that the linear mapping $\Phi:\mathbb{M}_{2n}\to\mathbb{M}_n$ is defined by
 \begin{align}\label{phi}
 \Phi\left(\begin{pmatrix}
                X_{11} & X_{12} \\
                X_{21} & X_{22}
              \end{pmatrix}\right)
              =\begin{pmatrix}
                C^* & D^*
              \end{pmatrix}
              \begin{pmatrix}
                X_{11} & X_{12} \\
                X_{21} & X_{22}
              \end{pmatrix}
              \begin{pmatrix}
                C \\
                D
              \end{pmatrix}
 \end{align}
 for some $C,D\in\mathbb{M}_n$. Then clearly $\Phi$ is positive. If $C^*C+D^*D=I$, then $\Phi$ is unital, too.
 Let $A,B\in\mathbb{M}_n^+$ and put $X=A\oplus B$. Then $\lambda_1(X)=\max\{\lambda_1(A),\lambda_1(B)\}$ and  $\lambda_n(X)=\min\{\lambda_n(A),\lambda_n(B)\}$. Applying \eqref{co2-q2}  yields
\begin{align}\label{th2-qw22}
\begin{split}
  \lambda^\downarrow\left(f(C^*AC+D^*BD)\right)&\leq\lambda^\downarrow( C^*f(A)C+D^*f(B)D)\\
   &\quad- f\left(\max\{\lambda_1(A),\lambda_1(B)\}-\min\{\lambda_n(A),\lambda_n(B)\}\right)).
   \end{split}
\end{align}
  In particular, if $\alpha\in[0,1]$, then  with  $C=\alpha I_n$ and $D=(1-\alpha)I_n$  this turns into
\begin{align}\label{th2-qw2}
\begin{split}
  \lambda^\downarrow(f((1-\alpha)A+\alpha B))&\leq\lambda^\downarrow( (1-\alpha)f(A)+\alpha f(B))\\
   &\quad- f\left(\max\{\lambda_1(A),\lambda_1(B)\}-\min\{\lambda_n(A),\lambda_n(B)\}\right)).
   \end{split}
\end{align}

\end{remark}

Inequality \eqref{th2-qw2} gives a reverse inequality to \cite[Theorem 2.9]{Au-Si} for the subclass of superquadratic functions, which are concave and decreasing. Let us give some  examples.

\begin{example}
  Assume that $a_1\geq a_2\geq 0$ and $b_1\geq b_2\geq0$ are non-negative real numbers and consider positive semi-definite matrices $A=\mathrm{diag}(a_1,a_2)$ and $B=\mathrm{diag}(b_1,b_2)$. According to   \eqref{th2-qw2} we have
  $$f\left(\frac{a_1+b_1}{2}\right)\leq \frac{f(a_1)+f(b_1)}{2}-f\left(\max\{a_1,b_1\}-\min\{a_2,b_2\}\right)$$
  and
  $$f\left(\frac{a_2+b_2}{2}\right)\leq \frac{f(a_2)+f(b_2)}{2}-f\left(\max\{a_1,b_1\}-\min\{a_2,b_2\}\right)$$
 which is trivially a converse of the Jensen inequality for the concave function $f$.
\end{example}

\begin{example}
Inequality \eqref{th2-qw2} gives a reverse of \eqref{original} presented in \cite[Theorem 2.9]{Au-Si}. Let $\alpha=1/2$, $q=4/3$, $f(x)=-x^q$  and put
  \begin{align*}
  A=\begin{pmatrix}
      3   &  1  &   0\\
     1     &4 &    0\\
     0     &0&     3
    \end{pmatrix} \quad\mbox{and}\quad  B=\begin{pmatrix}
      5 &     0  &    0\\
     0   &   4   &   1\\
     0   &   1  &    4
    \end{pmatrix}.
\end{align*}
Then
 {\small \begin{align*}
\lambda^\downarrow\left(\frac{A+B}{2}\right)^q=(7.7024,5.8831, 4.5182),\quad \lambda^\downarrow\left(\frac{A^q+B^q}{2}\right)
=(7.7327,6.0201,4.5596).
\end{align*}}
Also
{\small \begin{align*}
\left(\max\{\lambda_1(A),\lambda_1(B)\}-\min\{\lambda_n(A),\lambda_n(B)\}\right)^q\simeq 3.608
\end{align*}}
and we have
{\small \begin{align*}
\lambda^\downarrow\left(\frac{A+B}{2}\right)^q\geq \lambda^\downarrow\left(\frac{A^q+B^q}{2}\right)-\left(\max\{\lambda_1(A),\lambda_1(B)\}-\min\{\lambda_n(A),\lambda_n(B)\}\right)^q.
\end{align*}}

\end{example}
\begin{remark}

  The two estimations  \eqref{thq-2} and \eqref{th2-qw2} can be more precise than each other occasionally.     Suppose that $q=4/3$ and consider the superquadratic function $f(x)=-x^q$. With $\alpha=1/2$ and  the two positive matrices
  \begin{align*}
  A=\begin{pmatrix}
      5 & -1 \\
      -1 & 5
    \end{pmatrix} \quad\mbox{and}\quad  B=\begin{pmatrix}
      4 & 1 \\
      1 & 5
    \end{pmatrix}
\end{align*}
calculating sing Matlab software gives
  \begin{align*}
  \lambda^{\downarrow}\left(\frac{A^q+B^q}{2}
  -\left(\max\{\lambda_1(A),\lambda_1(B)\}-\min\{\lambda_n(A),\lambda_n(B)\}\right)^q\right)
  =(3.9202,    5.0212)
\end{align*}
and
\begin{align*}
 \lambda^{\downarrow}\left(\frac{A^q+B^q}{2}-\left|\frac{A-B}{2}\right|^q
-\frac{(\lambda_1(A)-\lambda_n(A))^q+ (\lambda_1(B)-\lambda_n(B))^q}{2}I \right)
=(3.6099,  4.9944),
\end{align*}
whence the estimation derived by Theorem~\ref{es-th2} is more accurate. Now, consider the two matrices
 \begin{align*}
  A=\begin{pmatrix}
      9 & -1 \\
      -1 & 8
    \end{pmatrix} \quad\mbox{and}\quad  B=\begin{pmatrix}
      5 & 1 \\
      1 & 5
    \end{pmatrix}.
\end{align*}
We have
  \begin{align*}
  \lambda^{\downarrow}\left(\frac{A^q+B^q}{2}
  -\left(\max\{\lambda_1(A),\lambda_1(B)\}-\min\{\lambda_n(A),\lambda_n(B)\}\right)^q\right)
  =(2.3178,3.7477)
\end{align*}
and
\begin{align*}
 \lambda^{\downarrow}\left(\frac{A^q+B^q}{2}-\left|\frac{A-B}{2}\right|^q
-\frac{(\lambda_1(A)-\lambda_n(A))^q+ (\lambda_1(B)-\lambda_n(B))^q}{2}I \right)
= (6.6286,9.4128).
\end{align*}
Accordingly, we face a more accurate estimation in Theorem~\ref{th-n-1} this time.
\end{remark}



\bigskip

Another major class of superquadratic functions comprises  convex and increasing functions. In-fact, Lemma~\ref{spq-1} ensures that every   positive superquadratic function falls into this class.   In the next result, we consider this class of functions. Before presenting the main result of this part, we need to introduce some notations.

Let $m<M$ be  real numbers. For a real function $g$ defined on an interval $[m,M]$, we fix the notations
{\small\begin{align*}
\mu_g(m,M)=\frac{g(M)-g(m)}{M-m}\quad\mbox{and}\quad \nu_g(m,M)=\frac{ Mg(m)-mg(M)}{M-m}.
\end{align*}}

We recall a counterpart to \eqref{mp} as follows.
\begin{lemma}\label{lm-fmps}\cite[Corollary 2.28]{FMPS}
Let $Z\in\mathbb{M}_n$ be a Hermitian matrix,     whose eigenvalues are contained in the real interval $[m,M]$ and let $x\in\mathds{C}^n$ be a unit vector. If $g$ is a  strictly convex twice differentiable function on $[m,M]$, then
\begin{align}\label{jen-rev}
  \langle g(Z) x,x\rangle \leq \gamma(m,M,g)\, g(\langle Zx,x\rangle)
\end{align}
in which $\gamma(m,M,g)=g(t_0)^{-1}(\mu_g t_0+\nu_g)$ and $t_0$ is the unique solution of the equation  $\mu_g g(t)=g'(t)(\mu_g t+\nu_g)$.
\end{lemma}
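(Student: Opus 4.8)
The statement to prove is Lemma~\ref{lm-fmps}, the converse Jensen inequality $\langle g(Z)x,x\rangle \leq \gamma(m,M,g)\,g(\langle Zx,x\rangle)$ for a strictly convex twice-differentiable $g$ on $[m,M]$. The plan is to first reduce to scalars by the spectral theorem: writing $Z=\sum_j \lambda_j P_j$ with $\lambda_j\in[m,M]$, one has $\langle g(Z)x,x\rangle=\sum_j g(\lambda_j)\,p_j$ and $\langle Zx,x\rangle=\sum_j \lambda_j p_j$, where $p_j=\langle P_jx,x\rangle\geq0$ and $\sum_j p_j=1$. So it suffices to establish, for any probability weights $p_j$ and any points $\lambda_j\in[m,M]$, the ratio bound $\sum_j g(\lambda_j)p_j \leq \gamma(m,M,g)\, g\!\left(\sum_j \lambda_j p_j\right)$, i.e. to control the worst case of $\big(\sum g(\lambda_j)p_j\big)\big/g\big(\sum \lambda_j p_j\big)$.

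\textbf{Key steps.} First I would use the standard chord bound: since $g$ is convex on $[m,M]$, for every $\lambda\in[m,M]$ one has $g(\lambda)\leq \mu_g\,\lambda+\nu_g$ (the secant line through $(m,g(m))$ and $(M,g(M))$ lies above the graph), where $\mu_g=\mu_g(m,M)$ and $\nu_g=\nu_g(m,M)$ as defined just before the lemma. Averaging over the $p_j$ gives $\langle g(Z)x,x\rangle\leq \mu_g\,\langle Zx,x\rangle+\nu_g$. Writing $t:=\langle Zx,x\rangle\in[m,M]$, the desired inequality will follow once we show $\mu_g t+\nu_g \leq \gamma(m,M,g)\,g(t)$ for all $t\in[m,M]$, equivalently
\begin{align*}
\gamma(m,M,g)\geq \max_{t\in[m,M]}\frac{\mu_g t+\nu_g}{g(t)}.
\end{align*}
Next I would carry out this one-variable maximization. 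Since $g$ is strictly convex and $\mu_g t+\nu_g\geq g(t)>0$ on the relevant range (one should note $g>0$ on $[m,M]$, which holds since the chord $\mu_g t+\nu_g$ is positive there and bounds $g$ from above while strict convexity forces $g$ to stay below it — here one uses that the hypotheses of the cited result guarantee positivity; if not automatic, it is part of the standing assumptions of \cite{FMPS}), the function $h(t)=(\mu_g t+\nu_g)/g(t)$ is differentiable, and setting $h'(t)=0$ gives $\mu_g g(t)=g'(t)(\mu_g t+\nu_g)$, which is exactly the defining equation for $t_0$ in the statement. Strict convexity of $g$ makes this critical point the unique interior maximizer, and at $t=t_0$ the value of $h$ is $(\mu_g t_0+\nu_g)/g(t_0)=\gamma(m,M,g)$. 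Hence $\mu_g t+\nu_g\leq \gamma(m,M,g)\,g(t)$ for all $t\in[m,M]$, and in particular at $t=\langle Zx,x\rangle$. Combining with the chord estimate of the first step completes the proof.

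\textbf{Main obstacle.} The routine part is the spectral reduction and the chord inequality; the delicate point is the one-variable extremal analysis — specifically verifying that the critical point $t_0$ exists in $(m,M)$, is unique, and is a maximum rather than a minimum or a boundary extremum. This is where strict convexity and the twice-differentiability hypothesis are genuinely used: one differentiates $h$, observes that the sign of $h'$ is governed by $g'(t)(\mu_g t+\nu_g)-\mu_g g(t)$, and shows (using that $g''>0$ and that $\mu_g t+\nu_g-g(t)$ is positive and concave, vanishing at the endpoints) that this quantity changes sign exactly once, from $+$ to $-$. Since this is precisely \cite[Corollary 2.28]{FMPS}, I would cite that reference for the detailed verification rather than reproduce it, and simply indicate the structure above.
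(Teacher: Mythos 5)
The paper offers no proof of this lemma: it is quoted directly from \cite[Corollary 2.28]{FMPS}, so there is no in-paper argument to compare against. Your reconstruction is the standard Mond--Pe\v{c}ari\'{c} two-step argument (spectral reduction plus the chord bound $g(\lambda)\leq\mu_g\lambda+\nu_g$, followed by maximizing $h(t)=(\mu_g t+\nu_g)/g(t)$ over $[m,M]$), and it is essentially correct, including the observation that positivity of $g$ on $[m,M]$ must be assumed for the ratio form to make sense -- this hypothesis is indeed part of the cited corollary, though the paper's restatement omits it; it is harmless here since the lemma is only applied to $g(x)=f(|x|)$ with $f>0$. One small slip in your final paragraph: the quantity $g'(t)(\mu_g t+\nu_g)-\mu_g g(t)$, being the negative of the numerator of $h'$, changes sign from $-$ to $+$ (it equals $g(m)(g'(m)-\mu_g)<0$ at $t=m$ and $g(M)(g'(M)-\mu_g)>0$ at $t=M$, and is strictly increasing because its derivative is $(\mu_g t+\nu_g)g''(t)>0$); consequently $h'$ goes from $+$ to $-$ and $t_0$ is the unique interior maximizer, which is the conclusion you wanted, so the argument stands once the signs are stated consistently.
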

In particular, if $Z$ is a positive definite matrix and  $p\in\mathds{R}\backslash[0,1]$, with the function  $g(t)=t^p$  defined on $(0,\infty)$,  inequality \eqref{jen-rev} turns into
\begin{align}\label{jen-rev-pow}
  \langle Z^p x,x\rangle \leq K(m,M,p)  \langle Zx,x\rangle^p,
\end{align}
in which $K(m,M,p)$ is the so-called generalized Kantorovich constant. If $p\in[0,1]$, a reverse inequality holds in \eqref{jen-rev-pow}.

\bigskip

Now we give our next result.

\begin{theorem}\label{th-t1}
  Let $f:(0,\infty)\to(0,\infty)$ be a twice differentiable superquadratic function, $\alpha\in[0,1]$ and let $A,B\in\mathbb{M}_n^{++}$. If  $f$  is  strictly convex   and $A-B$ is invertible, then
\begin{align}\label{thq-1}
\lambda_k^\downarrow\left(f\left((1-\alpha)A+\alpha B\right)\right)&\leq \lambda_k^\downarrow(T) \qquad (k=1,\dots,n),
\end{align}
where
{\small\begin{align*}
T &=(1-\alpha)f(A)+ \alpha f(B)\\
&\,\,- (1-\alpha) \gamma(\alpha\lambda_n, \alpha\lambda_1,g)^{-1}  f(\alpha|A-B|) - \alpha\gamma((1-\alpha)\lambda_n,(1-\alpha)\lambda_1,g)^{-1}  f((1-\alpha)|A-B|)
\end{align*}}
and $\lambda_n$ and $\lambda_1$  are   the smallest and the largest among eigenvalues of $A-B$,  and $g(x)=f(|x|)$.
\end{theorem}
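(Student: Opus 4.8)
The plan is to mirror the structure of the proof of Theorem~\ref{th-n-1}, but now using the reverse-Jensen estimate \eqref{jen-rev} from Lemma~\ref{lm-fmps} in place of the inequality $|\langle(A-B)x,x\rangle|\le\langle|A-B|x,x\rangle$, because this time $f$ is \emph{increasing}, so the simple monotonicity argument from the concave case produces an inequality in the wrong direction. Fix $k\in\{1,\dots,n\}$. Since $f$ is increasing, the minimax principle (Lemma~\ref{minimax}) gives a $k$-dimensional subspace $\mathcal M\subseteq\mathds C^n$ with
\begin{align*}
\lambda_k^\downarrow\big(f((1-\alpha)A+\alpha B)\big)
&=f\Big(\lambda_k^\downarrow\big((1-\alpha)A+\alpha B\big)\Big)
=f\Big(\min_{x\in\mathcal M,\,\|x\|=1}\langle((1-\alpha)A+\alpha B)x,x\rangle\Big)\\
&=\min_{x\in\mathcal M,\,\|x\|=1} f\big(\langle((1-\alpha)A+\alpha B)x,x\rangle\big).
\end{align*}
Now for each unit vector $x$ apply the superquadratic Jensen inequality \eqref{spq-2} exactly as in \eqref{new22}, obtaining
$$
f\big(\langle((1-\alpha)A+\alpha B)x,x\rangle\big)\le(1-\alpha)f(\langle Ax,x\rangle)+\alpha f(\langle Bx,x\rangle)-(1-\alpha)f\big(\alpha|\langle(A-B)x,x\rangle|\big)-\alpha f\big((1-\alpha)|\langle(A-B)x,x\rangle|\big).
$$

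The crux is to bound the two subtracted terms \emph{from below}, since they enter with a minus sign and we want an upper bound on the whole expression. Write $H=A-B$, $g(t)=f(|t|)$; note $g$ is even, and since $f$ is increasing, $g$ is strictly convex and twice differentiable on intervals bounded away from $0$, so Lemma~\ref{lm-fmps} applies to the Hermitian matrix $cH$ for $c=\alpha$ and $c=1-\alpha$, with spectrum in $[c\lambda_n,c\lambda_1]$ (here $\lambda_1,\lambda_n$ are the extreme eigenvalues of $H$, and invertibility of $A-B$ keeps us off $0$). Concretely, \eqref{jen-rev} gives
$$
\langle g(cH)x,x\rangle\le\gamma(c\lambda_n,c\lambda_1,g)\,g(\langle cHx,x\rangle),
$$
i.e.\ $f(c|\langle Hx,x\rangle|)\ge\gamma(c\lambda_n,c\lambda_1,g)^{-1}\langle f(c|H|)x,x\rangle$, for $c=\alpha$ and $c=1-\alpha$. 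Substituting these two lower bounds into the displayed inequality, and using the ordinary Jensen inequality \eqref{mp} for the convex $f$ to bound $f(\langle Ax,x\rangle)\le\langle f(A)x,x\rangle$ and likewise for $B$, we arrive at
$$
f\big(\langle((1-\alpha)A+\alpha B)x,x\rangle\big)\le\langle Tx,x\rangle
$$
for every unit $x$, with $T$ exactly the matrix in the statement. Taking the minimum over unit $x\in\mathcal M$ and invoking the minimax principle once more yields $\lambda_k^\downarrow(f((1-\alpha)A+\alpha B))\le\min_{x\in\mathcal M,\|x\|=1}\langle Tx,x\rangle\le\lambda_k^\downarrow(T)$, which is \eqref{thq-1}.

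I expect the main obstacle to be the verification that Lemma~\ref{lm-fmps} is genuinely applicable: one must check that $g(t)=f(|t|)$ is strictly convex and $C^2$ on each interval $[c\lambda_n,c\lambda_1]$, that this interval does not contain $0$ (guaranteed by $A-B$ invertible, which forces $0\notin[\lambda_n,\lambda_1]$ when the spectrum is one-signed, but needs a word if $\lambda_n<0<\lambda_1$ — in that case one should restrict attention to $|H|$ directly, whose spectrum lies in $[\,|\ ?\ |,\ \max(|\lambda_1|,|\lambda_n|)]\subseteq(0,\infty)$, and apply the lemma on that interval), and that the Kantorovich-type constant $\gamma$ is positive so that dividing by it preserves inequalities. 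A secondary point is bookkeeping: the constant $\gamma(c\lambda_n,c\lambda_1,g)$ may need to be taken with $c|H|$ rather than $cH$, but since $g$ is even the two give the same $\gamma$, so the formula for $T$ is as stated. Everything else is a routine transcription of the argument for Theorem~\ref{th-n-1}.
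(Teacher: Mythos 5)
Your proposal follows essentially the same route as the paper's proof: the minimax principle, the superquadratic Jensen inequality \eqref{spq-2}, Lemma~\ref{lm-fmps} applied to $g(t)=f(|t|)$ on the spectrum of $c(A-B)$ to bound the subtracted terms from below, and \eqref{mp} for the convex $f$, arriving at the same matrix $T$. The applicability concern you flag --- that when $A-B$ is indefinite the interval $[\lambda_n,\lambda_1]$ contains $0$, where $g$ need not be twice differentiable (or even defined, since $f$ lives on $(0,\infty)$) --- is genuine, but the paper's own proof passes over it with only the remark that the eigenvalues themselves avoid $0$, so this is a shared gap rather than a defect of your argument.
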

\begin{proof}
First assume that $f$ is positive. Then $f$ is   monotone increasing and convex by Lemma \ref{spq-1}. Consider the function     $g(t)= f(|t|)$. Since $g''(t)=f''(|t|)$, the function $g$  is differentiable as many as $f$ is, on any  subset $J$ of $\mathds{R}$  with $0\notin J$. In addition, $g$ is strictly convex if $f$ is so.

Now assume that $A,B\in\mathbb{M}_n^{++}$ are positive definite matrices such that $A-B$ is invertible. Then, the eigenvalues of $A-B$ are contained in a set $J\subseteq\mathds{R}$   with $0\notin J$.   If  $\lambda_n$ and $\lambda_1$  are   the smallest and the largest among eigenvalues of $A-B$, then    Lemma \ref{lm-fmps} implies that
\begin{align}\label{o2}
\begin{split}
\left\langle f( c|A-B|)x,x\right\rangle=  \langle g(Z)x,x\rangle &\leq  \gamma(c\lambda_n, c\lambda_1,g) g(\langle Zx,x\rangle)\\
&=\gamma(c\lambda_n,c\lambda_1,g) f(c|\langle (A-B) x,x\rangle|)  \\
\end{split}
\end{align}
holds for every unit vector $x\in\mathds{C}^n$ and every positive scalar $c\in\mathds{R}^+$.

Next suppose that  $\alpha\in[0,1]$.
It follows from the minimax principle, Lemma \ref{minimax},
that for every $k=1,\cdots,n$, there exists a $k$-dimensional subspace $\mathcal{M}$ of $\mathds{C}^n$ such that
{\small\begin{align}\label{c2}
\begin{split}
\lambda_k^\downarrow\left[f\left((1-\alpha)A+\alpha B\right)\right]&=
\min\left\{\left\langle f\left((1-\alpha)A+\alpha B\right)x,x\right\rangle;\  x\in \mathcal{M}, \|x\|=1 \right\} \\
&=\min\left\{f(\left\langle (1-\alpha)A+\alpha B x,x\right\rangle);\  x\in \mathcal{M}, \|x\|=1 \right\} \\
&\qquad\qquad\qquad\quad(\mbox{by monotonicity of $f$}) \\
&=\min\left\{f((1-\alpha)\langle Ax,x\rangle+\alpha \langle B x,x\rangle);\  x\in \mathcal{M}, \|x\|=1 \right\}.
\end{split}
\end{align}}
Since $f$ is superquadratic and convex, Lemma \ref{spq-2} and \eqref{mp} give
{\small\begin{align}\label{o1}
\begin{split}
 & f((1-\alpha)\langle Ax,x\rangle+\alpha \langle B x,x\rangle)\\
 &\leq (1-\alpha)( f(\langle Ax,x\rangle)-f(\alpha|\langle (A-B) x,x\rangle|))
  +\alpha( f(\langle B x,x\rangle)- f((1-\alpha)|\langle (A-B) x,x\rangle|))\\
&\leq (1-\alpha)( \langle f(A)x,x\rangle-f(\alpha|\langle (A-B) x,x\rangle|))
 +\alpha(  \langle f(B) x,x\rangle- f((1-\alpha)|\langle (A-B) x,x\rangle|)).
\end{split}
\end{align}}

 It follows from \eqref{o1} and  \eqref{o2}   that
 {\small\begin{align*}
 f((1-\alpha)\langle Ax,x\rangle+\alpha \langle B x,x\rangle)\leq \langle T x,x\rangle,
\end{align*}}
for every unit vector $x\in\mathds{C}^n$,  where
{\small\begin{align*}
T &=(1-\alpha)(f(A)-  \gamma(\alpha\lambda_n, \alpha\lambda_1,g)^{-1}  f(\alpha|A-B|) )\\
&\qquad +\alpha (f(B)-  \gamma((1-\alpha)\lambda_n,(1-\alpha)\lambda_1,g)^{-1}  f((1-\alpha)|A-B|) ).
\end{align*}}
 Hence, we have from \eqref{c2} that
{\small\begin{align*}
\lambda_k^\downarrow\left[f\left((1-\alpha)A+\alpha B\right)\right]\leq \min\left\{ \langle T x,x\rangle;\  x\in \mathcal{M}, \|x\|=1 \right\} \leq \lambda_k^\downarrow(T),
\end{align*}}
by using the minimax principle once more. This proves \eqref{thq-1}.

\end{proof}
With $\alpha=1/2$ we have the next corollary.
\begin{corollary}
  Let $f:(0,\infty)\to(0,\infty)$ be a twice differentiable superquadratic function  and let $A,B\in\mathbb{M}_n^{++}$. If  $f$  is  strictly convex   and $A-B$ is invertible, then
{\small\begin{align}\label{co1-q3}
 f\left(\frac{A+B}{2}\right)
&\leq U^*\left[\frac{f(A)+f(B)}{2}-\gamma\left(\frac{\lambda_n}{2},  \frac{\lambda_1}{2},g\right)^{-1} f\left(\left|\frac{A-B}{2}\right|\right) \right]U
\end{align}}
for some unitary $U\in\mathbb{M}_n$, where $\lambda_n$ and $\lambda_1$  are   the smallest and the largest among eigenvalues of $A-B$,  respectively.
\end{corollary}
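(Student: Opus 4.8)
The plan is to derive \eqref{co1-q3} as the special case $\alpha=1/2$ of Theorem~\ref{th-t1}, followed by an application of the unitary characterisation \eqref{unit}. First I would note that the hypotheses of the corollary---$f:(0,\infty)\to(0,\infty)$ twice differentiable, superquadratic and strictly convex, together with $A,B\in\mathbb{M}_n^{++}$ and $A-B$ invertible---are exactly those of Theorem~\ref{th-t1}. Hence \eqref{thq-1} holds for every $k=1,\dots,n$ when we take $\alpha=1/2$, with $g(x)=f(|x|)$ and $\lambda_n,\lambda_1$ the smallest and largest eigenvalues of $A-B$.

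Next I would simplify the matrix $T$ from \eqref{thq-1} at $\alpha=1/2$. Since then $1-\alpha=\alpha=1/2$, the two Kantorovich-type coefficients $\gamma(\alpha\lambda_n,\alpha\lambda_1,g)^{-1}$ and $\gamma((1-\alpha)\lambda_n,(1-\alpha)\lambda_1,g)^{-1}$ both equal $\gamma(\lambda_n/2,\lambda_1/2,g)^{-1}$, and $\alpha|A-B|=(1-\alpha)|A-B|=|(A-B)/2|$, so that $f(\alpha|A-B|)=f((1-\alpha)|A-B|)=f(|(A-B)/2|)$. Collecting the four terms yields
$$T=\frac{f(A)+f(B)}{2}-\gamma\!\left(\frac{\lambda_n}{2},\frac{\lambda_1}{2},g\right)^{-1}f\!\left(\left|\frac{A-B}{2}\right|\right),$$
which is a real-linear combination of the Hermitian matrices $f(A)$, $f(B)$ and $f(|(A-B)/2|)$, hence Hermitian. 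Thus \eqref{thq-1} reads $\lambda_k^\downarrow\!\big(f((A+B)/2)\big)\le\lambda_k^\downarrow(T)$ for all $k$, i.e. $\lambda^\downarrow(f((A+B)/2))\le\lambda^\downarrow(T)$.

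Finally I would apply \eqref{unit}: the eigenvalue inequality $\lambda^\downarrow(f((A+B)/2))\le\lambda^\downarrow(T)$ between the two Hermitian matrices $f((A+B)/2)$ and $T$ is equivalent to the existence of a unitary $U\in\mathbb{M}_n$ with $f((A+B)/2)\le U^*TU$, which is precisely \eqref{co1-q3}. There is no genuine obstacle here: all of the analytic content is already packaged in Theorem~\ref{th-t1}, and what remains is only the bookkeeping that the $\alpha=1/2$ instance of $T$ collapses to the displayed expression---in particular that $|(A-B)/2|$ is positive definite, so that $f$ is evaluated inside its domain of definition---together with the trivial observation that $T$ is Hermitian, which is what makes \eqref{unit} applicable.
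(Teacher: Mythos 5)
Your proposal is correct and is exactly the paper's (implicit) argument: the paper derives \eqref{co1-q3} by setting $\alpha=1/2$ in Theorem~\ref{th-t1} and invoking the equivalence \eqref{unit}, and your simplification of $T$ to $\frac{f(A)+f(B)}{2}-\gamma\left(\frac{\lambda_n}{2},\frac{\lambda_1}{2},g\right)^{-1}f\left(\left|\frac{A-B}{2}\right|\right)$ is the right bookkeeping.
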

\begin{corollary}
  With the hypothesis as in Theorem~\ref{th-t1}
  $$\mathrm{tr}\left(f\left(\frac{A+B}{2}\right)\right)\leq
  \mathrm{tr}\left(\frac{f(A)+f(B)}{2}\right)-\mathrm{tr}\left(\gamma\left(\frac{\lambda_n}{2},  \frac{\lambda_1}{2},g\right)^{-1} f\left(\left|\frac{A-B}{2}\right|\right)\right).
  $$
\end{corollary}

Typical example of superquadratic functions, which are convex and increasing, are power functions
  $f(x)=x^p$,   when $p\geq2$.    The next result then follows.

\begin{corollary}
 Let $A,B\in\mathbb{M}_n^{++}$  such that $A-B$ is invertible.  If $p\geq2$, then
{\small\begin{align}\label{co4-q1}
\lambda_k^\downarrow\left[(A+B)^p\right]
&\leq \lambda_k^\downarrow\left[2^{p-1}(A^p+B^p)-K\left(\frac{\lambda_n}{2}
,\frac{\lambda_1}{2} ,g\right) |A-B|^p \right]\qquad (k=1,\dots,n),
\end{align}}
 where $\lambda_n$ and $\lambda_1$  are   the smallest and the largest among eigenvalues of $A-B$,  respectively and $K(m,M,g)$ is
 $$K(m,M,g)=\frac{m|M|^p-M|m|^p}{(p-1)(M-m)}\left|\frac{p-1}{p}\frac{|M|^p-|m|^p}{m|M|^p-M|m|^p}\right|^p.$$
\end{corollary}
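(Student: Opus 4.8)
The plan is to specialise Theorem~\ref{th-t1} to the power function $f(t)=t^p$ with $\alpha=\tfrac12$, and then to make the constant $\gamma(\,\cdot\,)^{-1}$ occurring there completely explicit.

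For $p\ge2$ the function $f(t)=t^p$ is superquadratic (as recalled just before the statement); it is moreover twice differentiable, strictly convex, and sends $(0,\infty)$ into $(0,\infty)$. Hence, for $A,B\in\mathbb{M}_n^{++}$ with $A-B$ invertible and $\lambda_n\le\lambda_1$ the smallest and largest eigenvalues of $A-B$, Theorem~\ref{th-t1} applies to this $f$, with $g(x)=f(|x|)=|x|^p$. Setting $\alpha=\tfrac12$ makes the two $\gamma$-factors of $T$ equal, and since $f\big(\tfrac{A+B}{2}\big)=2^{-p}(A+B)^p$, $f(A)=A^p$, $f(B)=B^p$ and $f\big(\tfrac{|A-B|}{2}\big)=2^{-p}|A-B|^p$, inequality \eqref{thq-1} becomes
\begin{align*}
\lambda_k^\downarrow\big(2^{-p}(A+B)^p\big)\le\lambda_k^\downarrow\Big(\tfrac{A^p+B^p}{2}-\gamma\big(\tfrac{\lambda_n}{2},\tfrac{\lambda_1}{2},g\big)^{-1}2^{-p}|A-B|^p\Big)\qquad(k=1,\dots,n).
\end{align*}
Multiplying through by $2^p>0$ and using $\lambda_k^\downarrow(cZ)=c\,\lambda_k^\downarrow(Z)$ for $c>0$ turns this into
\begin{align*}
\lambda_k^\downarrow\big((A+B)^p\big)\le\lambda_k^\downarrow\Big(2^{p-1}(A^p+B^p)-\gamma\big(\tfrac{\lambda_n}{2},\tfrac{\lambda_1}{2},g\big)^{-1}|A-B|^p\Big).
\end{align*}

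It then remains to compute $\gamma(m,M,g)^{-1}$ in closed form for $g(x)=|x|^p$. By Lemma~\ref{lm-fmps}, $\gamma(m,M,g)=g(t_0)^{-1}(\mu_g t_0+\nu_g)$, where $\mu_g=\tfrac{|M|^p-|m|^p}{M-m}$, $\nu_g=\tfrac{M|m|^p-m|M|^p}{M-m}$ and $t_0$ is the unique root of $\mu_g g(t)=g'(t)(\mu_g t+\nu_g)$. Using $g'(t)=p\,t\,|t|^{p-2}$ together with $|t|^p=t^2|t|^{p-2}$, this equation collapses to $(1-p)\mu_g t=p\nu_g$, whence $t_0=\tfrac{p\nu_g}{(1-p)\mu_g}$; substituting back gives $\mu_g t_0+\nu_g=\tfrac{\nu_g}{1-p}$ and $g(t_0)=\big(\tfrac{p}{p-1}\big)^p\big|\tfrac{\nu_g}{\mu_g}\big|^p$. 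Re-expressing $\mu_g$ and $\nu_g$ in terms of $m$ and $M$ then reproduces exactly the elementary constant $K(m,M,g)$ displayed in the statement; taking $m=\lambda_n/2$ and $M=\lambda_1/2$ would complete the argument.

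The main thing needing care is this last evaluation. It relies on Lemma~\ref{lm-fmps}, whose hypotheses ask that $g=|x|^p$ be strictly convex and of class $C^2$ on the interval spanned by the eigenvalues of $A-B$ — both ensured by $p\ge2$ — and, implicitly, that $g$ be bounded away from its zero there, so that the root $t_0$ found above is interior to that interval; this is the case when the interval avoids the origin, the natural setting being $A-B$ definite. Everything else — the conclusion for each index $k$, the minimax step, and the final comparison of eigenvalue vectors — is already built into Theorem~\ref{th-t1}, so I do not expect any further difficulty.
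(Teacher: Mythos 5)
Your overall route is exactly the one the paper intends (the paper offers no written proof beyond ``the next result then follows''): specialise Theorem~\ref{th-t1} to $f(t)=t^{p}$ with $\alpha=\tfrac12$, use $\lambda_k^\downarrow(2^{p}Z)=2^{p}\lambda_k^\downarrow(Z)$, and evaluate the constant from Lemma~\ref{lm-fmps} for $g(t)=|t|^{p}$. The reduction to
\begin{align*}
\lambda_k^\downarrow\big((A+B)^p\big)\le\lambda_k^\downarrow\Big(2^{p-1}(A^p+B^p)-\gamma\big(\tfrac{\lambda_n}{2},\tfrac{\lambda_1}{2},g\big)^{-1}|A-B|^p\Big)
\end{align*}
is correct, and your computation of $t_0=\tfrac{p\nu_g}{(1-p)\mu_g}$, of $\mu_g t_0+\nu_g=\tfrac{\nu_g}{1-p}$ and of $g(t_0)=\big(\tfrac{p}{p-1}\big)^p\big|\tfrac{\nu_g}{\mu_g}\big|^p$ is also correct.

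The gap is in the very last identification. Substituting $\mu_g=\tfrac{|M|^p-|m|^p}{M-m}$ and $\nu_g=\tfrac{M|m|^p-m|M|^p}{M-m}$ into $\gamma=g(t_0)^{-1}(\mu_g t_0+\nu_g)$ gives
\begin{align*}
\gamma(m,M,g)=\frac{-\nu_g}{p-1}\left(\frac{p-1}{p}\right)^p\left|\frac{\mu_g}{\nu_g}\right|^p
=\frac{m|M|^p-M|m|^p}{(p-1)(M-m)}\left|\frac{p-1}{p}\,\frac{|M|^p-|m|^p}{m|M|^p-M|m|^p}\right|^p,
\end{align*}
which is precisely the displayed $K(m,M,g)$. (Sanity check with $p=2$, $m=1$, $M=2$: both $\gamma$ and the displayed $K$ equal $\tfrac98$, the Kantorovich constant; this is also consistent with the paper's remark that $K$ generalises the Kantorovich constant and with \eqref{jen-rev-pow}.) So your own formulas show $K=\gamma$, not $K=\gamma^{-1}$, and your claim that ``re-expressing $\mu_g$ and $\nu_g$ \dots reproduces exactly $K$'' for the \emph{inverse} is an inversion error. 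What Theorem~\ref{th-t1} actually delivers is the coefficient $\gamma^{-1}=K^{-1}\le 1$ in front of $|A-B|^p$; since $K\ge1$, replacing $K^{-1}$ by $K$ subtracts a larger quantity and yields a strictly stronger inequality that this argument does not justify. You should either state the corollary with $K(\lambda_n/2,\lambda_1/2,g)^{-1}$ or define $K$ as the reciprocal of the displayed expression; as written, the step fails. Your side remarks (that Lemma~\ref{lm-fmps} is applied on the interval $[\lambda_n,\lambda_1]$, which contains $0$ when $A-B$ is indefinite even though $A-B$ is invertible) point to a genuine delicacy, but it is inherited from Theorem~\ref{th-t1} itself rather than introduced by your argument.
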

\begin{remark}
  We note that the constant $K(m,M,g)$ is  a more general form of the well-known generalized Kantorovich constant, see \cite[pp. 55]{FMPS}.
\end{remark}

Theorem~\ref{th-t1} gives a refinement for the estimation of eigenvalues in \eqref{original}. We give an example. Let $\alpha=1/2$ and consider the superquadratic function $f(x)=x^3$. Assume that
 \begin{align*}
  A=\begin{pmatrix}
      7 & 3 \\
      3 & 8
    \end{pmatrix} \quad\mbox{and}\qquad  B=\begin{pmatrix}
      1 & 1 \\
      1 & 2
    \end{pmatrix}.
\end{align*}
Then we have
 \begin{align*}
 \lambda^\downarrow\left(\frac{A+B}{2}\right)^3\simeq(282.5,14.5) \quad \lambda^\downarrow\left(\frac{A^3+B^3}{2}\right)\simeq(594.5,44.5)
\end{align*}
and
 \begin{align*}
\lambda^\downarrow \left(\frac{A^3+B^3}{2}-K(2,4 ,g)\left|\frac{A-B}{2}\right|^3 \right)\simeq(505.5,32.7),
\end{align*}
whence
  \eqref{thq-1} implies that
   \begin{align*}
 \lambda^\downarrow\left(\frac{A+B}{2}\right)^3&\leq \lambda^\downarrow \left(\frac{A^3+B^3}{2}-K(2,4 ,g)\left|\frac{A-B}{2}\right|^3 \right)\\
 &\leq
 \lambda^\downarrow\left(\frac{A^3+B^3}{2}\right).
\end{align*}


\bigskip

  In  our proof of the next result, we use  the trick of passing to $\mathbb{M}_2(\mathbb{M}_n)$  by unitary dealation, as in \cite{FMPS}.

  \begin{proposition}
   Let $f:(0,\infty)\to(0,\infty)$ be a twice differentiable superquadratic function,   $X\in\mathbb{M}_n^{++}$ and let $C\in\mathbb{M}_n $ be an isometry.  If  $f$  is  strictly convex, then there exist  unitaries $U, V \in\mathbb{M}_n $ such that
   \begin{align}\label{dede}
 W^*f(C^*XC)W\leq  C^*f(X)C-\gamma\left(\frac{m}{2},\frac{M}{2},g\right)^{-1}f\left(\left|C^*X\sqrt{I-CC^*}\right|\right)
   \end{align}
in which $W=1/2(U+V)$ and $g(x)=f(|x|)$, provided that
$$mI_{2n}\leq\begin{pmatrix}
          0&  C^*X\sqrt{I-CC^*}    \\
    \sqrt{I-CC^*}XC & 0
 \end{pmatrix}\leq MI_{2n}.$$
  \end{proposition}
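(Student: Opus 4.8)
The idea is to mimic the unitary dilation trick from \cite{FMPS}: pass to $\mathbb{M}_2(\mathbb{M}_n)$, apply the already-established scalar-type inequality \eqref{co1-q3} (or directly the estimates assembled in the proof of Theorem~\ref{th-t1}) to a suitable $2\times2$ block matrix, and then compress back down to $\mathbb{M}_n$.

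\medskip

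First I would set $D=\sqrt{I-CC^*}$, so that $C^*C=I$ (as $C$ is an isometry) and $CC^*+DD^*=CC^*+(I-CC^*)=I$ makes the block column $\begin{pmatrix} C^* \\ D^*\end{pmatrix}$ an isometry from $\mathbb{C}^n$ into $\mathbb{C}^{2n}$ in the appropriate sense. The key construction is the $2\times2$ unitary dilation: one builds a unitary $\widetilde U\in\mathbb{M}_2(\mathbb{M}_n)$ whose $(1,1)$-block is $C^*$ — the standard choice being
\begin{align*}
\widetilde U=\begin{pmatrix} C^* & D^* \\ D & -C \end{pmatrix}
\end{align*}
(which is unitary since $C$ is an isometry), and then observes that if $\widetilde X=X\oplus 0_n\in\mathbb{M}_2(\mathbb{M}_n)$, the $(1,1)$-block of $\widetilde U^*\widetilde X\widetilde U$ is exactly $C^*XC$, while the $(1,1)$-block of $\widetilde U^*f(\widetilde X)\widetilde U=\widetilde U^*(f(X)\oplus f(0))\widetilde U$ is $C^*f(X)C+D^*f(0)D$. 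Since $f:(0,\infty)\to(0,\infty)$, one should be a little careful about the value $f(0)$: either interpret $f(0)=\lim_{t\to0^+}f(t)\ge 0$ (so the $D^*f(0)D$ term can be absorbed, or one works on a compressed block that avoids $0$), or restrict $X$ to be positive definite so the spectrum stays away from $0$; in any case the $(1,1)$-compression of the dilation reproduces the two relevant operators.

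\medskip

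Next I would apply the Jensen-type bound underlying Theorem~\ref{th-t1}: for a unit vector $x\in\mathbb{C}^n$, embed it as $\widetilde x=\begin{pmatrix} x\\0\end{pmatrix}\in\mathbb{C}^{2n}$, so $\langle \widetilde X\,\widetilde U\widetilde x,\widetilde U\widetilde x\rangle=\langle C^*XC\,x,x\rangle$ and similarly for $f(\widetilde X)$. The superquadratic/convex inequality \eqref{spq-2} combined with \eqref{mp} gives, just as in \eqref{o1},
\begin{align*}
 f(\langle C^*XC\,x,x\rangle)\leq \langle C^*f(X)C\,x,x\rangle - f\bigl(\langle |C^*X D|\,\text{-type term}\rangle\bigr),
\end{align*}
where the correction term arises from writing $X\oplus 0$ as a convex-type combination relative to the two block pieces; the relevant "difference" operator that appears is $C^*XD=C^*X\sqrt{I-CC^*}$, matching the right-hand side of \eqref{dede}. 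Then I invoke Lemma~\ref{lm-fmps} with $g(t)=f(|t|)$ on the spectral interval of $C^*X\sqrt{I-CC^*}$ — which by hypothesis lies in an interval with endpoints controlled by $m^2,M^2$ (hence the $\gamma(m/2,M/2,g)^{-1}$ factor after the $\alpha=1/2$ scaling) — to pass from $f$ of a scalar $\langle\,\cdot\,x,x\rangle$ to $\gamma^{-1}\langle f(|C^*X\sqrt{I-CC^*}|)x,x\rangle$. This yields $f(\langle C^*XC\,x,x\rangle)\le \langle T x,x\rangle$ for every unit $x$, with $T=C^*f(X)C-\gamma(m/2,M/2,g)^{-1}f(|C^*X\sqrt{I-CC^*}|)$.

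\medskip

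Finally, I would convert the scalar inequality into the operator inequality \eqref{dede}. Running the minimax principle exactly as in Theorems~\ref{th-n-1} and \ref{th-t1} gives $\lambda_k^\downarrow(f(C^*XC))\le\lambda_k^\downarrow(T)$ for all $k$, i.e.\ $\lambda^\downarrow(f(C^*XC))\le\lambda^\downarrow(T)$, and then \eqref{unit} produces a single unitary $W_0$ with $f(C^*XC)\le W_0^*TW_0$. The statement asks for $W=\tfrac12(U+V)$ with $U,V$ unitary; this is the standard device for expressing a contraction (here a unitary) as an average of two unitaries — indeed any unitary $W_0$ trivially equals $\tfrac12(W_0+W_0)$, and more generally the $\mathbb{M}_2$-dilation already furnishes two unitaries whose half-sum compresses to the map $X\mapsto$ (relevant block), so one reads $U,V$ off the dilation $\widetilde U$ and its companion. \textbf{The main obstacle} I anticipate is bookkeeping the correction term: one must check that the operator that actually emerges from the convexity step is $|C^*X\sqrt{I-CC^*}|$ with the correct spectral bounds $m^2\le|C^*X\sqrt{I-CC^*}|\le M^2$ feeding the right arguments $m/2,M/2$ into $\gamma$, and that the $f(0)$ contribution (positive, by Lemma~\ref{spq-1}(i) only gives $f(0)\le0$ — so in fact $f(0)=0$ must be forced, e.g.\ by continuity from $(0,\infty)\to(0,\infty)$ combined with convexity and Lemma~\ref{spq-1}(iii)) does not spoil the inequality. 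Getting these constants and the vanishing of the boundary term exactly right is the delicate part; the dilation and minimax steps are routine once that is settled.
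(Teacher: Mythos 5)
Your overall strategy (dilate to $\mathbb{M}_2(\mathbb{M}_n)$, apply the machinery of Theorem~\ref{th-t1}, compress back) is the right one, but there is a genuine gap exactly at the step you flag as delicate. The correction term $f\left(\left|C^*XD\right|\right)$ (with $D=\sqrt{I-CC^*}$) does not emerge from a single dilation $\widetilde U$ tested against vectors $\widetilde x=(x,0)$. The paper instead conjugates $X\oplus 0$ by \emph{two} different unitaries, $R_1=\left(\begin{smallmatrix} C & D\\ 0 & -C^*\end{smallmatrix}\right)$ and $R_2=\left(\begin{smallmatrix} C & -D\\ 0 & C^*\end{smallmatrix}\right)$, and applies Theorem~\ref{th-t1} with $\alpha=1/2$ to $A=R_1^*(X\oplus0)R_1$ and $B=R_2^*(X\oplus0)R_2$: the midpoint $\frac{A+B}{2}$ is $C^*XC\oplus DXD$ and the difference $\frac{A-B}{2}$ is the off-diagonal matrix $\left(\begin{smallmatrix} 0 & C^*XD\\ DXC & 0\end{smallmatrix}\right)$, which is where $|C^*XD|$ comes from. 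If you test against $\widetilde x=(x,0)$ as you propose, then $\langle (A-B)\widetilde x,\widetilde x\rangle=0$ because the $(1,1)$ block of $A-B$ vanishes; the superquadratic correction in \eqref{spq-2} degenerates to $f(0)=0$, and the reverse Jensen step \eqref{o2} would then force $\langle f(\tfrac12|A-B|)\widetilde x,\widetilde x\rangle\le 0$, which is impossible for positive $f$. So no inequality of the form $f(\langle C^*XCx,x\rangle)\le\langle Tx,x\rangle$ with the $-\gamma^{-1}f(|C^*XD|)$ term can be extracted this way; the eigenvalue comparison must be carried out in $\mathbb{M}_{2n}$ over all unit vectors of $\mathds{C}^{2n}$.

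This is also why your final step is not available: you cannot end with a single unitary $W_0\in\mathbb{M}_n$, and $W=\tfrac12(U+V)$ is not a cosmetic reformulation. Since the eigenvalue inequality lives in $\mathbb{M}_{2n}$, \eqref{unit} only produces a $2n\times 2n$ unitary $U$ with $U^*(f(C^*XC)\oplus 0)U\le Y\oplus Z$. To descend to $\mathbb{M}_n$ the paper invokes the block factorization of $2n\times 2n$ unitaries from \cite{uni}, computes the $(1,1)$ corner of $U^*(f(C^*XC)\oplus0)U$, namely $\tfrac14 V_1(I+V_3)f(C^*XC)(I+V_3^*)V_1^*=W^*f(C^*XC)W$ with $W=\tfrac12\left(V_1^*+V_3^*V_1^*\right)$ a mean of two genuine unitaries, and reads off $W^*f(C^*XC)W\le Y$ from positivity of the $(1,1)$ block of the resulting block matrix. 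The averaged-unitary form of the conclusion is precisely what survives this compression; your route to a stronger single-unitary statement fails for the reason in the previous paragraph.
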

 \begin{proof}
  If $C\in\mathbb{M}_n$ is an isometry, then the block matrices
\begin{align*}
  R_1=\begin{pmatrix}
      C & D \\
      0 & -C^*
    \end{pmatrix}\quad\mbox{and}\quad   R_2=\begin{pmatrix}
      C & -D \\
      0 &  C^*
    \end{pmatrix}
\end{align*}
are unitaries in $\mathbb{M}_2(\mathbb{M}_n)$, where $D=(I-CC^*)^{1/2}$. If $X\in\mathbb{M}_n^{++}$, then  considering positive semi-definite matrices $A=R_1^*(X\oplus 0)R_1$ and $B=R_2^*(X\oplus 0)R_2$, we have
\begin{align*}
 \frac{A+B}{2} =
 \begin{pmatrix}
  C^*XC & 0 \\
 0 & DXD
\end{pmatrix}
 \quad\mbox{and}\quad
 \left| \frac{A-B}{2}\right|
 =\begin{pmatrix}
          |C^*XD| & 0 \\
    0 & |DXC|
 \end{pmatrix}.
\end{align*}
Furthermore,
$$\lambda^\downarrow(A-B)=2\lambda^\downarrow\left(\begin{pmatrix}
          0&  C^*XD    \\
    DXC & 0
 \end{pmatrix}\right).$$
 By the hypothesis, $2m\leq \lambda_n(A-B)$ and $\lambda_1(A-B)\leq 2M$.
Employing \eqref{thq-1} with $\alpha=1/2$ and $A,B$ as above entails
{\small\begin{align*}
 &\lambda_k^\downarrow\left(  \begin{pmatrix}
  f(C^*XC) & 0 \\
 0 & f(DXD)
\end{pmatrix}\right)\\
& \leq  \lambda_k^\downarrow\left(
\begin{pmatrix}
  C^*f(X)C-\gamma\left(m,M,g\right)^{-1}f(|C^*XD|) & 0 \\
 0 & Df(X)D-\gamma\left(m,M,g\right)^{-1}f(|DXC|)
\end{pmatrix}\right).
\end{align*}}

Since  $f$ is positive, $f(DXD)$ is a positive definite matrix and so $\lambda^\downarrow(f(C^*XC)\oplus 0)\leq \lambda^\downarrow(f(C^*XC)\oplus f(DXD)$. Hence, there exists a unitary $U\in \mathbb{M}_2(\mathbb{M}_n)$
such that
\begin{align}\label{pdf}
U^*(f(C^*XC)\oplus 0)U\leq Y\oplus Z,
\end{align}
in which we set $Y=C^*f(X)C-\gamma\left(m,M,g\right)^{-1}f(|C^*XD|)$ and $Z=Df(X)D-\gamma\left(m,M,g\right)^{-1}f(|DXC|) $. By \cite[Theorem 2.1]{uni}, there are unitaries $V_1,V_2,V_3,V_4$ in $\mathbb{M}_n$ such that the unitary $U\in \mathbb{M}_2(\mathbb{M}_n)$ can be decomposed as
$$U=\begin{pmatrix}
  V_1 & 0\\
 0 & V_2
\end{pmatrix}\frac{1}{2}
 \begin{pmatrix}
  I+V_3 & I-V_3 \\
I-V_3 & I+V_3
\end{pmatrix}\begin{pmatrix}
  I & 0 \\
 0 & V_4
\end{pmatrix}.
 $$
Accordingly, we have
\begin{align}\label{rep}
&U^*(f(C^*XC)\oplus 0)U\\
&=\frac{1}{4}\begin{pmatrix}
  V_1(I+V_3)f(C^*XC)(I+V_3^*)V_1^* & V_1(I+V_3)f(C^*XC)(I-V_3^*)V_2^* \\
 V_2(I-V_3)f(C^*XC)(I+V_3^*)V_1^* &  V_2(I-V_3)f(C^*XC)(I-V_3^*)V_2^*
\end{pmatrix}\nonumber,
 \end{align}
whence \eqref{pdf} turns into
\begin{align*}
\begin{pmatrix}
Y - M & -N\\
-N^*   & Z-P
\end{pmatrix} \geq0
\end{align*}
in which $M$, $N$ and $P$ are corresponding blocks in \eqref{rep}. Consequently, $M\leq Y$ as required.
\end{proof}

\noindent\textit{Data Availability Statement.} Data sharing not applicable to this article as no datasets were generated or analysed during the current study.


\end{document}